\newcommand{\ld}{\ensuremath{,\ldots,}}
\newcommand{\ssq}{\ensuremath{\subseteq}}
\newcommand{\smin}{\ensuremath{\setminus}}
\newcommand{\inte}{\ensuremath{\mathrm{int}}}
\newcommand{\conv}{\ensuremath{\mathrm{Conv}}}
\newcommand{\torus}{\ensuremath{\mathbb{T}^2}}
\newcommand{\homeo}{\ensuremath{\mathrm{Homeo}}}
\newcommand{\homtwo}{\ensuremath{\mathrm{Homeo}_0(\mathbb{T}^2)}}
\newcommand{\nfolge}[1]{\ensuremath{(#1)_{n\in\mathbb{N}}}}
\newcommand{\alphlist}{\begin{list}{(\alph{enumi})}{\usecounter{enumi}\setlength{\parsep}{2pt}
      \setlength{\itemsep}{1pt} \setlength{\topsep}{5pt}
      \setlength{\partopsep}{3pt}}}
\newcommand{\arablist}{\begin{list}{(\arabic{enumi})}{\usecounter{enumi}\setlength{\parsep}{2pt}
          \setlength{\itemsep}{1pt} \setlength{\topsep}{5pt}
          \setlength{\partopsep}{3pt}}}
\newcommand{\romanlist}{\begin{list}{(\roman{enumi})}{\usecounter{enumi}\setlength{\parsep}{2pt}
              \setlength{\itemsep}{1pt} \setlength{\topsep}{5pt}
              \setlength{\partopsep}{3pt}}}
\newcommand{\Romanlist}{\begin{list}{(\Roman{enumi})}{\usecounter{enumi}\setlength{\parsep}{2pt}
              \setlength{\itemsep}{1pt} \setlength{\topsep}{5pt}
              \setlength{\partopsep}{3pt}}}
\newcommand{\bulletlist}{\begin{list}{$\bullet$}{\setlength{\parsep}{2pt}
                \setlength{\itemsep}{1pt} \setlength{\topsep}{5pt}
                \setlength{\partopsep}{3pt}\setlength{\leftmargin}{15pt}}} 
\newcommand{\Alphlist}{\begin{list}{(\Alph{enumi})}{\usecounter{enumi}\setlength{\parsep}{2pt}
      \setlength{\itemsep}{1pt} \setlength{\topsep}{5pt}
      \setlength{\partopsep}{3pt}}}
 \newcommand{\listend}{\end{list}}
\newcommand{\T}{\ensuremath{\mathbb{T}}}
\newcommand{\N}{\ensuremath{\mathbb{N}}} 
\newcommand{\R}{\ensuremath{\mathbb{R}}}
\newcommand{\Z}{\ensuremath{\mathbb{Z}}}
\newcommand{\cA}{\mathcal{A}}
\newcommand{\cB}{\mathcal{B}}
\newcommand{\cC}{\mathcal{C}}
\newcommand{\cF}{\mathcal{F}}
\newcommand{\cM}{\mathcal{M}}
\newcommand{\cO}{\mathcal{O}}
\newcommand{\cR}{\mathcal{R}}
\newcommand{\cS}{\mathcal{S}}
\newcommand{\nLim}{\ensuremath{\lim_{n\rightarrow\infty}}}
\newcommand{\iLim}{\ensuremath{\lim_{i\rightarrow\infty}}}
\newcommand{\inergsum}{\ensuremath{\sum_{i=0}^{n-1}}}
\newcommand{\ntel}{\ensuremath{\frac{1}{n}}}
\newcommand{\achtel}{\ensuremath{\frac{1}{8}}}
\newtheoremstyle{tobthm}{3pt}{3pt}{\itshape}{0pt}{\bfseries}{.}{0.5eM}{}
\theoremstyle{tobthm}
\newtheorem{definition}{Definition}[section]
\newtheorem{thm}[definition]{Theorem}
\newtheorem{lem}[definition]{Lemma}
\newtheorem{lemma}[definition]{Lemma}
\newtheorem{cor}[definition]{Corollary}
\newtheorem{prop}[definition]{Proposition}
\newtheorem{conj}[definition]{Conjecture}
\newtheoremstyle{tobrem}{3pt}{3pt}{\normalfont}{0pt}{\bfseries}{.}{0.5em}{}
\theoremstyle{tobrem}
\numberwithin{equation}{section}
\numberwithin{figure}{section}
\def\Cl{\mathop\mathrm{Cl}}
\title{\Large\textsc{Rotation sets and almost periodic sequences}}
\author{T.~J\"ager$^\ast$, A.~Passeggi$^\dagger$ and S.~\v Stimac$^§$}
\thanks{$^\ast$ Department of Mathematics, TU Dresden, {\tt
    Tobias.Oertel-Jaeger@tu-dresden.de} } \thanks{$^\dagger$ Department of
  Mathematics, TU-Dresden. {\tt alepasseggi@gmail.com}.}  \thanks{$^§$
  Department of Mathematics, University of Zagreb \& Department of Mathematical
  Sciences IUPUI, {\tt sonja@math.hr}}
\begin{document}

\begin{abstract}
  We study the rotational behaviour on minimal sets of torus homeomorphisms and
  show that the associated rotation sets can be any type of line segments as
  well as non-convex and even plane-separating continua. This shows that
  restrictions which hold for rotation set on the whole torus are not valid on
  minimal sets.

  The proof uses a construction of rotational horseshoes by Kwapisz to transfer
  the problem to a symbolic level, where the desired rotational behaviour is
  implemented by means of suitable irregular Toeplitz sequences.
\end{abstract}

\maketitle

\section{Introduction.}

Given a torus homeomorphisms $f:\torus \to \torus$ homotopic to the identity, 
a lift $F:\R^2\to\R^2$ and any set $M\ssq \torus$, the {\em rotation set of
$F$ on $M$} is defined as 
\begin{equation}
  \label{e.RotationSet}
  \rho_M(F) \ = \ \left\{\rho\in\R^2\ \left|\ \exists n_i\nearrow\infty,\ 
  z_i\in\pi^{-1}(M): \iLim \left(F^{n_i}(z_i)-z_i \right)/n \ = \ \rho \right.\right\} \ ,
\end{equation}
where $\pi:\R^2\to\torus$ denotes the canonical projection.  In case $M=\torus$,
the set $\rho(F)=\rho_{\torus}(F)$ is simply called the {\em rotation set
  of $F$}. It takes a central place in the classification of torus
homeomorphisms, since a wealth of dynamical information can be obtained from the
shape of $\rho(F)$ (see, for example,
\cite{franks:1989}--\nocite{llibre/mackay:1991,franks:1995,jaeger:2009b,Davalos2012THwithRotationSegments}\cite{KoropeckiTal2012StrictlyToral}
and references therein). A crucial fact in this context is that $\rho(F)$ is
always compact and convex \cite{misiurewicz/ziemian:1989}. 
Concerning the rotational behaviour on minimal subsets, it is known that if
$\rho(F)$ has non-empty interior, then for every vector $\rho\in\inte(\rho(F))$
there exists a minimal set $M_\rho\ssq\torus$ with $\rho_{M_\rho}(F)=\{\rho\}$
\cite{misiurewicz/ziemian:1991}. Further, if $M$ is minimal, then $\rho_M(F)$ is
always compact and connected \cite{PasseggiRotSetAxA}, and examples in
\cite{PasseggiRotSetAxA} show that it can be a line segment of the form
$\{0\}\times[a,b]$ with $a<b$.

The aim of this note is to explore more complex rotational behaviour on minimal
sets. The bottomline is that apparently no restrictions exist for the associated
rotation sets, besides compactness and connectedness. We demonstrate this by
means of three types of examples, which are actually all realised by the same
torus homeomorphism.  Denote by $\homtwo$ the set of homeomorphisms of \torus\
homotopic to the identity. 
\begin{thm} \label{t.mainresult}
  There exists $f\in\homtwo$ with lift $F:\R^2\to\R²$ such that 
\begin{itemize}
\item[(a)] for an open set $V\ssq\R^2$ and all $v\in V$ there is a minimal set
  $M_v$ such that $\rho_{M_v}(F)$ is a line segment of positive length contained
  in $v+\R v^\perp$;
\item[(b)] for some minimal set $M$, the associated rotation set $\rho_M(F)$ is
  plane-separating;
\item[(c)] for some minimal set $M'$, the associated rotation set $\rho_{M'}(F)$
  has non-empty interior.
\end{itemize}
\end{thm}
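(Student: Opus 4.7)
The plan is to combine Kwapisz's construction of a rotational horseshoe with a symbolic realization of minimal dynamics via Toeplitz sequences. The same $f$ will serve for all three parts: its horseshoe will carry a sufficiently rich set of rotation vectors that different minimal subshifts sitting inside it can realize rotation sets of each prescribed type.

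Choose $f\in\homtwo$ with lift $F$ such that Kwapisz's scheme yields a compact $f$-invariant set $\Lambda\ssq\torus$, a finite alphabet $\{0,\ldots,N-1\}$ with associated rotation vectors $v_0,\ldots,v_{N-1}\in\R^2$, and a continuous factor map $\pi_s:\Lambda\to\Sigma_N$ onto the full shift $(\Sigma_N,\sigma)$ semiconjugating $f|_\Lambda$ to $\sigma$, such that for every $\omega\in\Sigma_N$ and every $z\in\pi^{-1}(\pi_s^{-1}(\omega))$,
\begin{equation*}
F^n(z)-z \ = \ \sum_{k=0}^{n-1} v_{\omega_k} \ + \ O(1).
\end{equation*}
By taking $N$ and the generators rich enough, we arrange that $V\ssq\inte\conv\{v_0,\ldots,v_{N-1}\}$ for a prescribed open set $V\ssq\R^2$. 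Then for any shift-invariant $X\ssq\Sigma_N$, the rotation set of $\pi_s^{-1}(X)$ coincides with
\begin{equation*}
\mathrm{Av}(X)\ =\ \left\{\lim_\ell \frac{1}{n_\ell}\sum_{k=j_\ell}^{j_\ell+n_\ell-1}v_{\omega_k}\ :\ \omega\in X,\ n_\ell\to\infty,\ j_\ell\in\Z\right\}.
\end{equation*}
Moreover, $X$ is minimal in $\Sigma_N$ if and only if $\pi_s^{-1}(X)$ is minimal in $\Lambda$. The task therefore reduces to constructing, for each desired rotation set in (a)--(c), a minimal subshift $X\ssq\Sigma_N$ with $\mathrm{Av}(X)$ of prescribed shape.

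For this we use irregular Toeplitz sequences $\omega\in\Sigma_N$: their orbit closures $X_\omega$ are automatically minimal, and the hierarchy of periodic skeletons filling in the successive ``holes'' of $\omega$ gives fine control on the accumulation set of block averages. For part (a), given $v\in V$, select two patterns whose per-symbol averages $u^\pm$ are symmetric about $v$ with $u^+-u^-$ parallel to $v^\perp$, and alternate them on successive scales so that $\mathrm{Av}(X_\omega)$ is exactly the segment $[u^-,u^+]\ssq v+\R v^\perp$. For part (b), fix a plane-separating continuum $K\ssq\inte\conv\{v_i\}$ (for instance a topologist's-sine-curve-type set) together with a countable dense subset $\{w_m\}_{m\in\N}\ssq K$; at the $m$-th level of the Toeplitz hierarchy, insert filling blocks whose running averages are close to $w_m$. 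For part (c), perform the same procedure with $\{w_m\}$ dense in an open subregion of $\inte\conv\{v_i\}$, forcing $\mathrm{Av}(X_\omega)$ to contain that open set.

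The main obstacle is carrying out the Toeplitz construction so that $\mathrm{Av}(X_\omega)$ coincides with the prescribed continuum. At each new scale one must simultaneously refine the periodic skeleton (to maintain almost periodicity, hence minimality), insert filling blocks long enough for the new target average to be realized as an actual limit point, and preserve all previously attained limit points without creating spurious ones outside the target set. This requires a careful inductive choice of scale lengths, with each new level dominating its fresh holes enough to reach the new target average while being dominated on the older positions by the skeleton already in place. Once these quantitative estimates are in hand, parts (a), (b), (c) of Theorem~\ref{t.mainresult} follow simultaneously by realizing the three target continua inside the common horseshoe of $f$.
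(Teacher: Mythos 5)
The overall strategy in your proposal coincides with the paper's: pass to a Kwapisz-type rotational horseshoe, reduce the problem to a purely symbolic one (rotation sets of minimal subshifts computed from block averages), and realize the desired rotation sets by inductively constructed Toeplitz sequences. Part~(c) as you describe it is essentially the paper's argument. However, the two hard parts, (a) and (b), are under-specified, and (b) has a genuine gap.

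For (a), ``alternate two patterns on successive scales'' has the right flavor, but it does not by itself control the fluctuations \emph{in the direction of $v$}, which is what keeps the rotation set inside the affine line $v+\R v^\perp$ rather than just inside the convex hull of $u^\pm$. Because Toeplitz blocks of every lower level are embedded throughout each higher-level block, a window whose endpoints cut through partial blocks of several levels can drift off the target line unless the running sums are explicitly controlled. The paper does this through the quantity $D(1,j)$ (the projection of the partial sum onto $v$, shifted by $j\|v\|$) and the inductive bound of Proposition~\ref{p:is}, which is the actual content of the proof. Your proposal acknowledges ``quantitative estimates are needed'' but does not identify this mechanism, and without it the claim that $\rho_{\cM}$ lies in $v+\R v^\perp$ has no support.

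For (b) the problem is structural, not just a missing estimate. Choosing a plane-separating continuum $K$ and filling Toeplitz levels so that block averages hit a dense subset of $K$ gives only $K\ssq\rho_{\cM}$, which is not enough: a set containing a plane-separating continuum need not itself separate the plane. Indeed, windows that straddle two filling regions aimed at distant points $w_m, w_{m'}\in K$ produce averages that are convex combinations of $w_m$ and $w_{m'}$, and these generically land in the bounded complementary components of $K$, filling the holes and destroying separation. Realizing an \emph{arbitrary} prescribed continuum is precisely the open conjecture stated in the introduction, and the paper does not attempt it. Instead, the paper constructs a very specific sequence for which (i) \emph{every} window average $\rho(J)$ is trapped inside a thin neighborhood $S=B_{1/8}(T)$ of the boundary triangle $T$ of the simplex (Proposition~\ref{p:sp}), and (ii) a sliding-window argument (Proposition~\ref{p:cc}) shows that the resulting limit set contains a continuum winding once around inside $S$, hence separating the two complementary components of $S$. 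Step (i), the trapping of \emph{all} window averages in an annular region, is the key idea your proposal is missing; without an analogue of it, you cannot rule out that $\rho_{\cM}$ is simply connected.
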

The proof of Theorem~\ref{t.mainresult} can roughly be outlined as follows. The
homeomorphism $f$ is chosen such that it has a {\em rotational horseshoe} with
three symbols and the topology depicted in Figure~\ref{fig.RotationalHorseshoe}.
This construction essentially goes back to \cite{kwapisz:1992}, where it is
implemented in much greater generality to show that every rational polygon can
occur as the rotation set of a torus homeomorphism. For our purposes, the
important fact is that in this situation we obtain an invariant set
$\Lambda=\bigcap_{n\in\Z}f^n(\pi(D))$, where $D\ssq\R^2$ is a topological disk
that projects injectively to $\torus$, and a symbolic coding
$h:\Lambda\to\{0,1,2\}^\Z$ such that $h\circ f=\sigma\circ h$. Moreover, given
$z\in\Lambda$, the entry $h(z)_0$ determines whether a lift $\hat z\in D$ of
$z\in\pi(D)$ remains in $D$, moves to $D+(1,0)$ or to $D+(0,1)$. Consequently,
if we let $v_0=(0,0)$, $v_1=(1,0)$ and $v_2=(0,1)$, then the displacement vector
$F^n(z)-z$ differs from the vector $\inergsum v_{h(z)_i}$ only by an error term
that is bounded uniformly in $n\in\N$ and $z\in\Lambda$. Asymptotically, this
means that rotation vectors and sets are completely determined by the coding,
and the rotational behaviour on minimal sets can be studied on a purely symbolic
level.  The crucial issue on the technical side then is to construct suitable
almost periodic sequences that produce the desired rotation sets. To that end,
we work within the class of irregular Toeplitz sequences, which have been used
previously to produce a number of interesting examples in topological and
symbolic dynamics
\cite{JacobsKeane1969ToeplitzSequences,MarkleyPaul1979PositiveEntropyToeplitzFlows,Williams1984ToeplitzFlows,Downarowicz2005ToeplitzFlows}. In
certain aspects, our construction is reminiscent of these more classical ones.

\begin{figure}
  \centering
  \epsfig{file=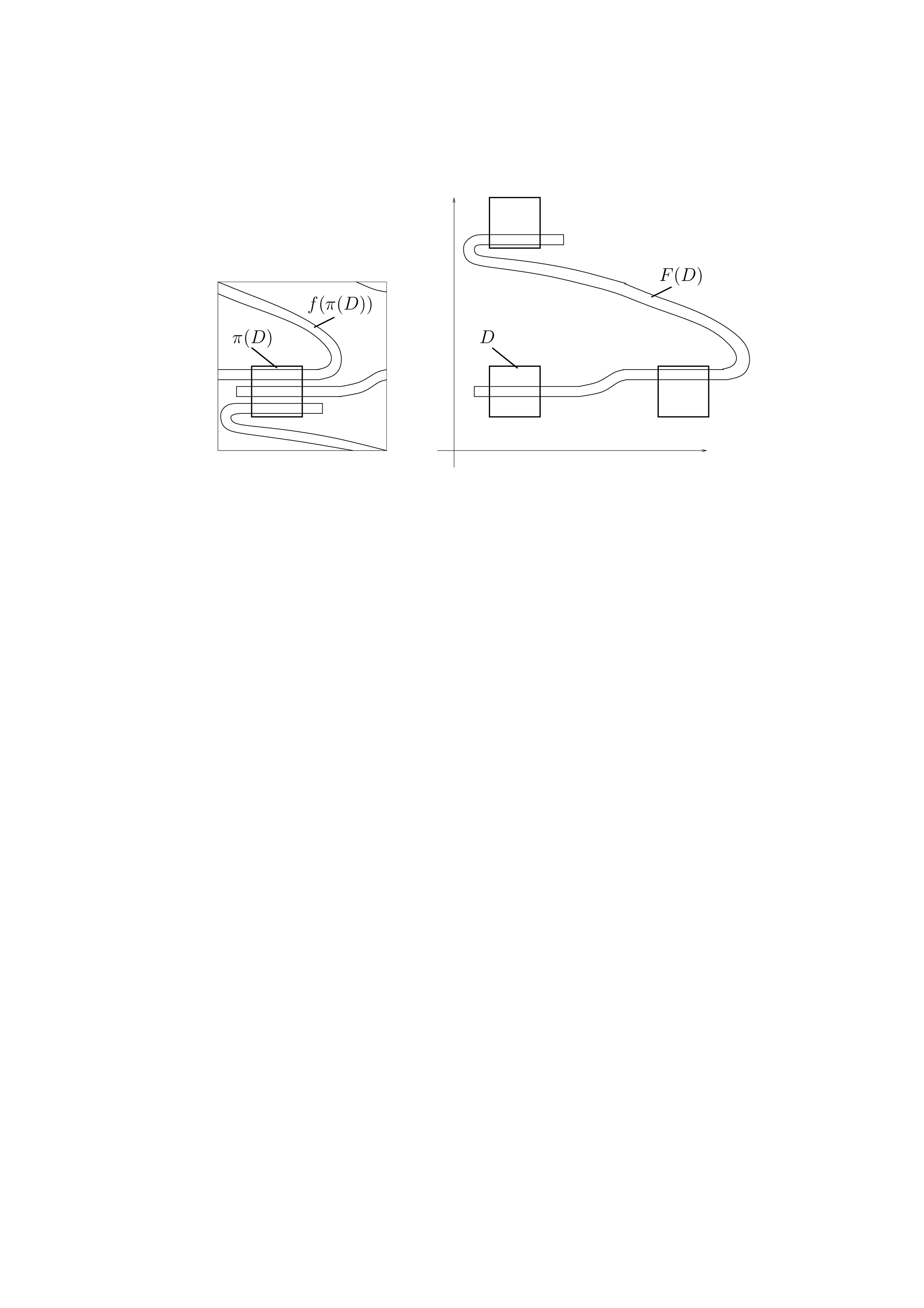, clip=,width=\linewidth}

  \caption{Geometry of a rotational horseshoe: The horseshoe is located in the
    topological disk $\pi(D)\ssq \T^2$ shown on the left. The Markov partition is
    given by the preimages of the connected components of $\pi(D)\cap f(\pi(D))$. The
    situation for the lift is depicted on the right, the displacement vectors
    are $v_0=(0,0),\ v_1=(1,0)$ and $v_2=(0,1)$.\label{fig.RotationalHorseshoe}}
\end{figure}

It is well-known that a dynamical situation like the one in
Figure~\ref{fig.RotationalHorseshoe} is stable under perturbations. Hence, our
construction immediately yields an open set of torus homeomorphisms that satisfy
the assertions of Theorem~\ref{t.mainresult}. Moreover, it is known that the
existence of rotational horseshoes is $\cC^0$-generic within an open and dense
subset of \homtwo, see \cite{PasseggiRotSetAxA}. In order to give a precise
statement in our context, we denote by $\cF$ the set of those $f\in\homtwo$
whose rotation sets have non-empty interior. Then $\cF$ is open in the
$\cC^0$-topology \cite{misiurewicz/ziemian:1991}, and we have
\begin{thm}\label{t.genericity}
  The set of $f\in\homtwo$ which satisfy the assertions of
  Theorem~\ref{t.mainresult} form an open and dense subset of $\cF$.
\end{thm}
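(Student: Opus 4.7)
The statement splits into an openness claim and a density claim within $\cF$, and both reduce to the symbolic nature of the construction used in Theorem~\ref{t.mainresult}.

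For openness, the plan is to observe that the geometric data driving that construction -- a topological disk $D \ssq \R^2$ projecting injectively into $\torus$, together with three subdisks of $\pi(D)$ whose $f$-images traverse $D$, $D+(1,0)$ and $D+(0,1)$ in full-width Markov fashion -- are encoded by finitely many open transversality and intersection conditions. All of these are robust under $C^0$-perturbations of $f$. Hence every $g$ sufficiently close to $f$ in $\homtwo$ admits a disk $D_g$ and a horseshoe $\Lambda_g=\bigcap_{n\in\Z} g^n(\pi(D_g))$ with a symbolic coding $h_g\colon \Lambda_g\to\{0,1,2\}^\Z$ that conjugates $g|_{\Lambda_g}$ to the full shift. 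The displacement vectors $v_0,v_1,v_2$ are integer-valued topological invariants of the Markov partition and are therefore locally constant in $g$. Since the minimal sets $M_v$, $M$ and $M'$ produced in the proof of Theorem~\ref{t.mainresult} are specified purely by explicit Toeplitz sequences, the $h_g$-preimages of these same sequences form minimal subsets of $\Lambda_g$ whose rotation sets coincide with the ones obtained for $f$; thus $g$ satisfies assertions (a)--(c).

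For density, fix $f \in \cF$. Since $\cF$ is open in $\homtwo$ and, by \cite{PasseggiRotSetAxA}, the existence of a rotational horseshoe is $C^0$-generic in an open and dense subset of $\homtwo$, an arbitrarily small $C^0$-perturbation places us at a homeomorphism (still in $\cF$) that admits a rotational horseshoe $H$ with integer displacement vectors $w_0,\ldots,w_{k-1}$. Because $\rho(F)$ has non-empty interior, standard horseshoe-creation arguments in the spirit of Kwapisz allow a further small perturbation, if necessary, guaranteeing that three of the displacements, say $w_0,w_1,w_2$, are affinely independent: concretely, one selects three periodic orbits with non-collinear rational rotation vectors in $\interior(\rho(F))$ -- whose existence is supplied by the Misiurewicz--Ziemian realization theorem -- and perturbs to create the corresponding Markov geometry. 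Restricting to sequences in $\{0,1,2\}^\Z$ yields a three-symbol sub-horseshoe, and the proof of Theorem~\ref{t.mainresult} then applies verbatim upon replacing the reference vectors $(0,0),(1,0),(0,1)$ by $w_0,w_1,w_2$: the affine map sending the probability simplex onto $\conv\{w_0,w_1,w_2\}$ is a bijection onto a non-degenerate triangle, so every Toeplitz sequence constructed there produces a minimal set whose rotation set is the corresponding affine image of the one obtained with the standard vectors, preserving each of (a), (b) and (c).

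The principal obstacle is the density step, specifically arranging that the perturbed horseshoe carries three symbols with affinely independent displacement vectors. While natural inside $\cF$ (where interior rotation vectors abound), this is not quite stated in \cite{PasseggiRotSetAxA} and would either need to be extracted from the arguments there or established by a short supplementary perturbation that enlarges the symbol set using standard periodic-orbit-connecting techniques. Once this three-symbol configuration is available, the openness and density arguments above combine to give the claim.
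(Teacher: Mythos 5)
Your proposal takes a genuinely different route from the paper, and both halves of your argument have gaps that the paper's route is designed to avoid.

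The paper does \emph{not} argue $C^0$-robustness of the horseshoe directly. Instead it introduces the class $\cF_0\ssq\cF$ of fitted Axiom~A diffeomorphisms (Axiom~A with zero-dimensional non-wandering set). Density of $\cF_0$ in $\cF$ comes from Shub--Sullivan; openness comes from Nitecki's $C^0$ semi-stability theorem, which gives, for each $f\in\cF_0$, a $C^0$-neighbourhood $\cU(f)$ and for each $g\in\cU(f)$ a semiconjugacy $h$ with $h\circ g=f\circ h$ homotopic to the identity. This yields $\rho_C(G)=\rho_{h(C)}(F)$ for closed $g$-invariant $C$, so the minimal sets of $f$ can be lifted to minimal sets of $g$ with the same rotation sets, giving $\bigcup_{f\in\cF_0}\cU(f)\ssq\cG$. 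Finally $\cF_0\ssq\cG$ is established via \cite{PasseggiRotSetAxA}: every $f\in\cF_0$ has a basic piece whose rotation set has non-empty-interior convex hull, and from it one extracts a three-rectangle Markov partition whose lifts carry non-collinear integer displacement vectors (for an iterate $f^n$); a linear change of coordinates reduces to $v_1=(1,0)$, $v_2=(0,1)$, and $\rho_M(F^n)=n\rho_M(F)$ passes the conclusion back to $f$.

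Your openness step is the weak point. You assert that $C^0$-nearby $g$ admit a horseshoe $\Lambda_g$ with a coding $h_g$ that \emph{conjugates} $g|_{\Lambda_g}$ to the full three-shift, and then push Toeplitz sequences through $h_g^{-1}$. But $C^0$-stable topological crossing data only give a semiconjugacy in general: without hyperbolicity the fibres of the itinerary map can be nondegenerate continua, and $h_g^{-1}(\cM)$ for a minimal subshift $\cM$ need not be minimal (it only \emph{contains} a minimal set mapping onto $\cM$). You would need either to insert that extra passage to a minimal subset and re-verify the displacement estimate, or to work with a hyperbolic model as the paper does. As for density, you correctly flag that arranging three affinely independent displacement vectors is the missing piece; the paper supplies exactly this via the basic-piece results of \cite{PasseggiRotSetAxA} rather than by an unreferenced perturbative horseshoe-creation argument. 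Your affine change-of-coordinates observation for general $w_0,w_1,w_2$ is essentially what the paper does, so that part is fine once the existence of such a configuration is actually secured.
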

In fact, we believe that this set is equal to $\cF$ and, that arbitrary continua
in the interior of $\rho(F)$ can be realised. This leads to the following
\begin{conj}
  Given $f\in\homtwo$ with $\inte(\rho(F))\neq\emptyset$ and any continuum
  $C\ssq\inte(\rho(F))$, there exists a minimal set $M_C$ such that
  $\rho_{M_C}(F)=C$.
\end{conj}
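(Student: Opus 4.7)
The plan is to combine the two ingredients already used in the proof of Theorem~\ref{t.mainresult}---rotational horseshoes with prescribed displacement vectors, and irregular Toeplitz constructions---but to scale both up from the three-symbol setting to one that can accommodate an arbitrary continuum in $\inte(\rho(F))$.

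Given $f\in\homtwo$ with $\inte(\rho(F))\neq\emptyset$ and a continuum $C\ssq\inte(\rho(F))$, I would first choose rational vectors $v_1,\ldots,v_k\in\Q^2\cap\inte(\rho(F))$ whose convex hull is a polygon containing $C$ strictly in its interior; density of $\Q^2$ in $\inte(\rho(F))$ makes this routine. By Kwapisz's construction \cite{kwapisz:1992}---exactly the mechanism invoked in the proof of Theorem~\ref{t.mainresult}, now with $k$ symbols in place of three---one obtains, after passing to a suitable iterate $f^q$, an invariant set $\Lambda\ssq\torus$, a topological disk $D\ssq\R^2$ projecting injectively to $\torus$, and a semiconjugacy $h:\Lambda\to\{1,\ldots,k\}^\Z$ such that the displacement $F^{qn}(\hat z)-\hat z$ of any lift $\hat z$ of $z\in\Lambda$ differs from $q\sum_{i=0}^{n-1} v_{h(z)_i}$ by a term bounded uniformly in $n$ and $z$. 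As in the paper, this reduces the problem to a purely symbolic one: realise $C$ as the set of Birkhoff accumulation points of $\frac{1}{n}\sum_{i=0}^{n-1} v_{\omega_i}$ on some minimal subshift $\Omega\ssq\{1,\ldots,k\}^\Z$.

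The second and decisive step is this symbolic realisation. I would inductively build an irregular Toeplitz sequence $\omega$ in the spirit of the present paper: fix a nested sequence of periodic skeletons of periods $p_1\mid p_2\mid\cdots$, and at stage $n$ fill the remaining holes with carefully designed blocks whose empirical symbol statistics lie close to a prescribed point $c_n$ of a finite $1/n$-net in $C$. The enumeration of the $c_n$ must be arranged so that every point of $C$ is approximated infinitely often along arbitrarily long windows, while simultaneously the block statistics never stray outside an $\eps_n$-neighbourhood of $C$ with $\eps_n\to 0$; both constraints are compatible because $C$ lies in the interior of $\conv\{v_1,\ldots,v_k\}$, leaving enough freedom to steer the empirical frequencies to any target inside that polygon. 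The hierarchical Toeplitz structure will guarantee that the orbit closure of $\omega$ is minimal, and the uniform frequency of the Toeplitz skeletons at each level will force all points of this minimal set to share the same set of Birkhoff accumulation points---which, by construction, equals $C$.

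The main obstacle lies in this last step. The inclusion $\rho_{M_C}(F)\ssq C$ demands strict control of empirical frequencies along \emph{every} sufficiently long window rather than merely along some subsequence, and is what forces $C$ to be embedded strictly inside $\conv\{v_1,\ldots,v_k\}$. Conversely, attaining every point of $C$---especially those on its topological boundary---relies crucially on the connectedness of $C$: one traverses $C$ via a sequence of samples $c_n$ with consecutive elements close together, and continuity of empirical means along concatenated filling blocks prevents the set of accumulation points from skipping intermediate targets. Once these quantitative estimates are in place, lifting $\Omega$ to $\Lambda$ via $h$ and taking the closure of the corresponding $f$-orbit produces the desired minimal set $M_C$ with $\rho_{M_C}(F)=C$.
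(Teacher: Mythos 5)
The statement you are addressing is labelled a \textbf{Conjecture} in the paper; the authors explicitly present it as an open problem (``we believe that\ldots arbitrary continua in the interior of $\rho(F)$ can be realised''), so there is no proof in the paper to compare against. Your proposal is a reasonable plan of attack, and correctly identifies the two ingredients one would want (a rotational horseshoe and a Toeplitz-type symbolic construction), but it has genuine gaps and does not constitute a proof.

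The most serious gap is in the first step. Kwapisz's construction in \cite{kwapisz:1992} \emph{builds} a homeomorphism possessing a rotational horseshoe with prescribed displacement vectors; it does not \emph{locate} such a horseshoe inside an arbitrary $f\in\homtwo$ with $\inte(\rho(F))\neq\emptyset$. The paper's own Section~\ref{Abundance} only produces a three-symbol rotational horseshoe with displacement vectors $0,v,w$, and only for the open dense subset $\cF_0\ssq\cF$ of fitted Axiom~A maps (together with their $\cC^0$-neighbourhoods); even there, the triangle $\conv\{0,v,w\}$ is scaled by $1/n$ after passing to the iterate $f^n$, and there is no control guaranteeing it contains a given continuum $C\ssq\inte(\rho(F))$. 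To cover arbitrary $f\in\cF$ and arbitrary $C$ you would need to exhibit, inside $f$ itself, a rotational horseshoe whose displacement vectors span a polygon containing $C$, and this is precisely the kind of statement that is currently unknown; asserting it as ``routine'' begs the question. Note also that for non-generic $f$, no horseshoe whatsoever need be present.

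The symbolic realisation step is also underspecified at the point where the work actually lies. Proposition~\ref{propextension} reduces the problem to controlling \emph{all} sufficiently long subwords of $\omega$, not merely the initial segments $[1,a_n]$, and the three constructions in the paper (line segments, plane-separating continua, non-empty interior) each require a bespoke hierarchy of estimates (Lemmas~\ref{l5}--\ref{p:sp} and their analogues) to prevent long windows from producing averages outside the target set. Your sketch asserts that steering $c_n$ through a dense net of $C$ with $c_n\to c_{n+1}$ small, while keeping all block statistics within $\eps_n$ of $C$, is ``compatible'' --- but reconciling the long-window inclusion $\rho_{\cM(\omega)}\ssq C$ with the approximation of boundary points of $C$ is exactly the delicate balance that makes the general case hard, and the proposal gives no mechanism (no analogue of the depth function and the $D(i,j)$ bounds) to make it work. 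Connectedness of $C$ is indeed necessary, since $\rho_M(F)$ is always a continuum on a minimal set, but you offer no argument that it is sufficient. In short, the outline is sensible and consistent with the paper's philosophy, but both of its load-bearing steps are currently unproven, which is why the statement remains a conjecture.
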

\medskip

\noindent{\bf Acknowledgments.} This work was partially carried out during the
conference `Dimension Theory and topological complexity of skew products' in
Vienna in October 2013, and we would like to thank the organisers Henk Bruin and
Roland Zweim\"uller for creating this opportunity as well as the
Erwin-Schr\"odinger-Institute for its hospitality and the superb conditions
provided during the event. T.J. acknowledges support of the German Research
Council (Emmy Noether Grant Ja 1721/2-1) and thanks the Max-Planck-Institute for
Mathematics in Bonn for its hospitality during the Dynamics and Numbers activity
in June and July 2014, when this work was finalised. S.\v S. acknowledges
partial support of the NEWFELPRO Grant No.~24 HeLoMa. 

\section{Rotational horseshoes and the symbolic computation of rotation
  sets}\label{symbolicdynamics}

{\em Rotational horseshoes.} We say that $R\subset \T^2$ is a {\em (topological)
  rectangle} if it is homeomorphic to the unit square $[0,1]^2$. Given an
invariant set $C\subset\T^2$ of $f\in\homeo_0(\T^2)$, we say that a family of
pairwise disjoint rectangles $\mathcal{R}=\{R_0,\dots,R_N\}$ is a {\em
  partition} of $C$ if $C\subset \bigcup_{i=0}^N R_i$. In this case, we let
$\Sigma:=\{0,\dots,N\}^\Z$ and denote by $\mathcal{S}$ the set of those
sequences $\omega\in\Sigma$ for which there exists $x\in C$ with $f^i(x)\in
R_{\omega(i)}$ for all $i\in \Z$. Then $\mathcal{S}$ is compact and invariant
under the shift $\sigma$ on $\cS$.  If it happens, as in the classical horseshoe
construction, that for every sequence $\xi\in \mathcal{S}$ there is a unique
$x\in C$ with $f^i(x)\in R_{\omega(i)}$ for all $i\in\Z$, then the map
$h_{\mathcal{R}}:\mathcal{S}\to C$ taking $\xi$ to the corresponding $x$ is a
conjugacy from $\sigma_{|\cS}$ to $f_{|C}$. This happens to be the case for any
zero-dimensional hyperbolic set $C$ in $\T^2$ with local product structure. In
fact, in this situation the partition $\cR$ can be chosen such that it is a
\textit{Markov partition}, that is, $\cS$ is a subshift of finite type (see
\cite{PasseggiRotSetAxA,beguin:2002}). If $\cS=\Sigma$, we call $C$ a horseshoe
and say it is {\em rotational} if in addition the following two
properties are satisfied: (i) there exists a bounded topological disk
$D\ssq\R^2$ such that $\pi(D)\ssq\torus$ is a topological disk and
$\bigcup_{i=0}^NR_i\ssq \pi(D)$; (ii) for each $i=0\ld N$ there exists a unique
vector $v_i$ such that if $z\in D\cap \pi^{-1}(C)$ and $\pi(z)\in R_i$, then
$F(z)\in D+v_i$. In other words, in a rotational horseshoe the symbolic coding
determines to which copy of $D$ a point is mapped by $F$. As mentioned before,
this allows to compute rotation sets and rotation vectors on a purely symbolic
level.

More precisely, given a finite word $w=w_1\dots w_m$, let $|w|=m$ be the length
of $w$ and $\psi(w)=\sum_{j=1}^{m}v_{w_j}$. Further, for a closed and
$\sigma$-invariant set $\mathcal{M}\subset \cS$ we define
\begin{equation}
  \rho_{\mathcal{M}}\ =\ \left\{\left.\nLim \frac{\psi(w^{(n)})}{|w^{(n)}|} \ \right| \ w^{(n)}
    \mbox{ is a subword of some }\omega^{(n)}\in\mathcal{M}
    \textrm{ and }  |w^{(n)}| \geq n \right\} \ .
\end{equation}
The following lemma provides the crucial estimate that allows to translate these
symbolic to dynamical rotation sets. Given $\omega\in\Sigma$, we let
$\omega_{[1,n]}=\omega(1)\omega(2)\ldots\omega(n)$.
\begin{lemma}[\cite{PasseggiRotSetAxA}, Proposition 2.1]\label{SymbDesp} There
  exists $r>0$ so that for any $z\in C$ we have $F^{n+1}(z)-z\in
  B_r(\psi(h_{\cR}^{-1}(z)_{[1,n]}))$.
\end{lemma}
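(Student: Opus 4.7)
\medskip

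\noindent\textbf{Proof plan.} The plan is to invoke the defining property (ii) of a rotational horseshoe repeatedly and track the lift of a point along its orbit, then observe that the diameter of $D$ and the size of the displacement vectors $v_0,\dots,v_N$ give the uniform error $r$.

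First, since $\pi(D)$ is a topological disk and $\pi^{-1}(\pi(D))$ is a disjoint union of translates of $D$, the projection $\pi$ restricts to a homeomorphism from $D$ onto $\pi(D)$. Using $C\ssq\bigcup_{i=0}^N R_i\ssq\pi(D)$, I therefore pick for each $z\in C$ the unique lift $\hat z\in D$ (this is the sense in which I read the statement $z\in C$). Set $\omega=h_{\cR}^{-1}(z)\in\cS$ and define
\[
S_n \ = \ \sum_{i=0}^{n} v_{\omega(i)} \ .
\]
I claim that for all $n\geq 0$ one has $F^{n+1}(\hat z)\in D+S_n$. The base case $n=0$ is exactly property (ii): $\pi(\hat z)=z\in R_{\omega(0)}$, so $F(\hat z)\in D+v_{\omega(0)}=D+S_0$. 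For the inductive step, assume $F^n(\hat z)\in D+S_{n-1}$ and let $\hat z_n=F^n(\hat z)-S_{n-1}\in D$. Then $\pi(\hat z_n)=f^n(z)\in R_{\omega(n)}$, and applying (ii) to $\hat z_n$ gives $F(\hat z_n)\in D+v_{\omega(n)}$, i.e.\ $F^{n+1}(\hat z)\in D+S_{n-1}+v_{\omega(n)}=D+S_n$.

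From this the lemma follows with room to spare. Writing $\psi(\omega_{[1,n]})=\sum_{i=1}^n v_{\omega(i)}=S_n-v_{\omega(0)}$, I obtain
\[
F^{n+1}(\hat z)-\hat z \ \in\ (D-\hat z)+\psi(\omega_{[1,n]})+v_{\omega(0)} \ ,
\]
and since $\hat z\in D$ the set $D-\hat z$ has diameter at most $\diam(D)$ and contains $0$. Setting
\[
r \ = \ \diam(D)+\max_{0\leq i\leq N}\|v_i\|
\]
thus yields $F^{n+1}(\hat z)-\hat z\in B_r\bigl(\psi(\omega_{[1,n]})\bigr)$ uniformly in $z\in C$ and $n\in\N$, as claimed.

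There is no real obstacle here — the content is conceptual rather than technical. The only thing to be careful about is the choice of lift (which must be the one in $D$ in order to start the induction) and the off-by-one between $S_n=\sum_{i=0}^n v_{\omega(i)}$ coming from the inductive argument and $\psi(\omega_{[1,n]})=\sum_{i=1}^n v_{\omega(i)}$ appearing in the statement; this discrepancy is exactly one vector $v_{\omega(0)}$ and is absorbed into $r$.
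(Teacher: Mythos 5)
Your proof is correct, and it is exactly the standard argument: the paper itself states this lemma without proof (quoting \cite{PasseggiRotSetAxA}, Proposition 2.1), but the induction you give -- iterating property (ii) of a rotational horseshoe, using that $F$ commutes with integer translations to get $F^{n+1}(\hat z)\in D+\sum_{i=0}^{n}v_{\omega(i)}$, and absorbing $\diam(D)$ plus the off-by-one term $v_{\omega(0)}$ into $r$ -- is precisely the argument sketched in the introduction and in the cited source. Your care about choosing the lift $\hat z\in D$ and about the indexing discrepancy between $S_n$ and $\psi(\omega_{[1,n]})$ is exactly where the only subtleties lie, so nothing is missing.
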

As a direct consequence, we obtain
\begin{cor} \label{c.symbolic_rotset}
  $\rho_{\mathcal{M}}=\rho_{h_{\mathcal{R}}(\mathcal{M})}(F)$.
\end{cor}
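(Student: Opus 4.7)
The plan is to establish both inclusions of the equality, using Lemma~\ref{SymbDesp} in each direction to compare the $O(1)$ dynamical displacements to the symbolic cocycle sums $\psi(w)$. Since the error in that lemma is a fixed constant $r$ independent of the word length, it becomes negligible after division by $|w|$, which is precisely the mechanism that makes the symbolic and dynamical rotation sets coincide.

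For the inclusion $\rho_{\mathcal{M}}\subseteq\rho_{h_{\mathcal{R}}(\mathcal{M})}(F)$, I would take $\rho\in\rho_{\mathcal{M}}$ with witnessing subwords $w^{(n)}$ of sequences $\omega^{(n)}\in\mathcal{M}$ satisfying $|w^{(n)}|\geq n$ and $\psi(w^{(n)})/|w^{(n)}|\to\rho$. Using $\sigma$-invariance of $\mathcal{M}$, I would shift each $\omega^{(n)}$ so that $w^{(n)}$ occupies positions $1,\dots,|w^{(n)}|$, obtaining $\tilde\omega^{(n)}\in\mathcal{M}$. Setting $z_n:=h_{\mathcal{R}}(\tilde\omega^{(n)})\in h_{\mathcal{R}}(\mathcal{M})$ and fixing any lift $\hat z_n\in\pi^{-1}(z_n)$, Lemma~\ref{SymbDesp} gives $\|F^{|w^{(n)}|+1}(\hat z_n)-\hat z_n-\psi(w^{(n)})\|\leq r$. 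Dividing by $|w^{(n)}|+1\to\infty$ shows that $\rho$ is realised by the pair $(\hat z_n,|w^{(n)}|+1)$ in the definition (\ref{e.RotationSet}) of $\rho_{h_{\mathcal{R}}(\mathcal{M})}(F)$.

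For the reverse inclusion, I would start with $\rho=\lim_{i\to\infty}(F^{n_i}(\hat z_i)-\hat z_i)/n_i$ where $n_i\nearrow\infty$ and $\hat z_i\in\pi^{-1}(h_{\mathcal{R}}(\mathcal{M}))$, set $\omega^{(i)}:=h_{\mathcal{R}}^{-1}(\pi(\hat z_i))\in\mathcal{M}$, and consider the subwords $w^{(i)}:=\omega^{(i)}_{[1,n_i-1]}$, which have length $n_i-1$. Lemma~\ref{SymbDesp} then yields $\|F^{n_i}(\hat z_i)-\hat z_i-\psi(w^{(i)})\|\leq r$, so that $\psi(w^{(i)})/|w^{(i)}|\to\rho$. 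After passing to a subsequence ensuring $|w^{(i)}|=n_i-1\geq i$, these subwords meet the conditions defining $\rho_{\mathcal{M}}$, placing $\rho$ in that set.

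The entire content of the argument is carried by Lemma~\ref{SymbDesp}, so I do not anticipate any serious obstacle. The only bookkeeping points worth verifying are that the constant error washes out under division by lengths tending to infinity, and that $\sigma$-invariance of $\mathcal{M}$ indeed permits realigning an arbitrary subword to the positive index range $[1,|w|]$---both are straightforward.
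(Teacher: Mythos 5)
Your proof is correct and spells out exactly what the paper treats as a ``direct consequence'' of Lemma~\ref{SymbDesp}: the paper gives no written proof of the corollary, and the two-inclusion argument you supply --- shifting $\omega^{(n)}$ so the witnessing subword occupies $[1,|w^{(n)}|]$, invoking Lemma~\ref{SymbDesp} to bound the discrepancy between $F^{|w^{(n)}|+1}(\hat z_n)-\hat z_n$ and $\psi(w^{(n)})$ by the fixed constant $r$, and observing that $r/|w^{(n)}|\to 0$, with the symmetric reasoning for the other inclusion --- is precisely the intended argument. The small bookkeeping items you flag (full $\sigma$-invariance of $\mathcal{M}$ justifies realignment even for negative shifts; reindexing so $|w^{(i)}|\ge i$) are handled correctly.
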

A sequence $\omega\in\Sigma$ is {\em almost periodic} if any finite subword
occurs infinitely often and the time between two occurrences is uniformly
bounded. It is well-known that $\omega$ is almost periodic if and only if
$\overline{\cO_\sigma(\omega)}$ is minimal. Moreover, in this case
$\overline{\cO_\sigma(\omega)}$ coincides with the set of those sequences
$\xi\in\Sigma$ which have exactly the same subwords as $\omega$
\cite{auslander1988minimal}. Together with 
Corollary~\ref{c.symbolic_rotset}, this yields the following statement.
\begin{prop}\label{propextension}
  Given an almost periodic sequence $\omega\in\Sigma$, the set
  $\cM=\overline{\cO_f(h(\omega))}$ is minimal with respect to $f$ and we have 
  \begin{equation} 
    \rho_{\cM}(F)=\left\{\left.\nLim
      \frac{\psi(w^{(n)}|}{|w^{(n)}|}\ \right|
      w^{(n)} \textrm{ is a subword of } \omega, |w^{(n)}|\geq n \right\} \ .
\end{equation}
\end{prop}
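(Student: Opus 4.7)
The plan is to reduce everything to the symbolic side via Corollary~\ref{c.symbolic_rotset} and then exploit the characterisation of $\overline{\cO_\sigma(\omega)}$ for almost periodic $\omega$ recalled just before the proposition. Concretely, set $\cN=\overline{\cO_\sigma(\omega)}\ssq\cS$. Since $h=h_{\cR}$ is a topological conjugacy between $\sigma_{|\cS}$ and $f_{|C}$, it is in particular a homeomorphism, so $h(\cO_\sigma(\omega))=\cO_f(h(\omega))$ and therefore $h(\cN)=\overline{\cO_f(h(\omega))}=\cM$. Minimality of $\cN$ (which is equivalent to almost periodicity of $\omega$) then transfers to minimality of $\cM$ under $f$, settling the first assertion.

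For the rotation set, the first step is to apply Corollary~\ref{c.symbolic_rotset} to $\cM=h(\cN)$ to get $\rho_\cM(F)=\rho_\cN$, where
\[
  \rho_\cN=\left\{\nLim \frac{\psi(w^{(n)})}{|w^{(n)}|}\ \left|\ w^{(n)} \text{ is a subword of some } \omega^{(n)}\in\cN,\ |w^{(n)}|\geq n\right.\right\}.
\]
So the remaining task is to show that the set on the right coincides with the one appearing in the statement, where the words $w^{(n)}$ are required to be subwords of $\omega$ itself. The inclusion $\supseteq$ is immediate because $\omega\in\cN$.

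For the reverse inclusion I would invoke the characterisation quoted in the paper: because $\omega$ is almost periodic, $\cN$ equals the set of all $\xi\in\Sigma$ having exactly the same finite subwords as $\omega$. Consequently, if $w^{(n)}$ is a subword of some $\omega^{(n)}\in\cN$, then $w^{(n)}$ is also a subword of $\omega$, and the two symbolic rotation sets coincide as subsets of $\R^2$. This gives the claimed equality for $\rho_\cM(F)$.

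The only mildly delicate point is to confirm that the conjugacy $h$ actually sends the almost-periodic orbit closure to the one on the dynamical side, so that the identification $\cM=h(\cN)$ is literally the set appearing in the proposition; this is where one must recall that $h$ is a bijective continuous map between compact sets and hence a homeomorphism, so closures commute with $h$. Once this is in place the rest is a direct translation via Corollary~\ref{c.symbolic_rotset} and the subword characterisation of $\overline{\cO_\sigma(\omega)}$, with no further estimates required.
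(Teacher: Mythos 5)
Your proposal is correct and matches the paper's intended argument exactly; the paper gives no written proof, offering the proposition as a direct consequence of Corollary~\ref{c.symbolic_rotset} together with the preceding facts about almost periodic sequences, and you have simply supplied the routine details (transfer of minimality along the conjugacy $h=h_{\cR}$, application of the corollary, and the subword characterisation of $\overline{\cO_\sigma(\omega)}$ to collapse ``subword of some $\omega^{(n)}\in\cN$'' to ``subword of $\omega$'').
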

For constructing suitable almost periodic sequences, it is convenient to work
only in the one-sided shift space. Due to the following folklore lemma, this is
sufficient. 
\begin{lem}
  Suppose $\omega^+\in\Sigma^+$ is almost periodic and $\omega$ is any sequence
  in $\Sigma$ whose right side coincides with $\omega^+$. Then
  $\overline{\cO_\sigma(\omega)}$ is minimal and coincides with the set of sequences that 
  have exactly the same finite subwords as $\omega^+$.
\end{lem}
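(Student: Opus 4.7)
The plan is to identify the candidate minimal subshift directly from $\omega^+$ and then show that it is the orbit closure of $\omega$. Define
\[
X \ = \ \left\{\xi\in\Sigma : \text{every finite subword of }\xi\text{ is a subword of }\omega^+\right\}.
\]
This $X$ is closed and $\sigma$-invariant, and is non-empty since it contains any accumulation point of $\sigma^n\omega$ as $n\to+\infty$: for such $n$ every fixed window of $\sigma^n\omega$ coincides with a window of $\omega^+$, and this property passes to limits. My proof of the lemma amounts to showing (i) that $X$ is minimal, and (ii) that $\overline{\cO_\sigma(\omega)}=X$.

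The core step is (i). I would reformulate almost periodicity as follows: since for each $m$ there are only finitely many subwords of $\omega^+$ of length $2m+1$, each occurring in $\omega^+$ with uniformly bounded gaps, there exists $L=L(m)$ such that every subword of $\omega^+$ of length $L$ contains every subword of $\omega^+$ of length $2m+1$. Given $\xi,\eta\in X$, the window $\eta_{[-m,m]}$ is a subword of $\omega^+$, and every length-$L(m)$ window of $\xi$, being itself a subword of $\omega^+$, must contain $\eta_{[-m,m]}$. This produces positions $n_m\in\Z$ with $|n_m|\to\infty$ and $\xi_{[n_m-m,n_m+m]}=\eta_{[-m,m]}$; a diagonal extraction then yields a subsequence along which $\sigma^{n_m}\xi\to\eta$. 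Hence $\overline{\cO_\sigma(\xi)}=X$ for every $\xi\in X$, so $X$ is minimal.

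For (ii), the inclusion $X\subseteq\overline{\cO_\sigma(\omega)}$ follows in the same spirit: any $\eta\in X$ has $\eta_{[-m,m]}$ reappearing in $\omega^+$ at arbitrarily large positions by almost periodicity, hence at arbitrarily large positions in $\omega$, giving $\sigma^{n_m}\omega\to\eta$ with $n_m\to+\infty$. For the reverse inclusion, any accumulation point $\eta$ of $\sigma^{n_k}\omega$ with $n_k\to+\infty$ has each of its finite windows eventually equal to a window of $\omega^+$, so $\eta\in X$; together with minimality this forces $\overline{\cO_\sigma(\omega)}=X$. The only real subtlety is the role of the arbitrary left tail of $\omega$: strictly speaking it matters only for subsequential limits at $n\to-\infty$, and the standard interpretation of the statement is that $\overline{\cO_\sigma(\omega)}$ refers to the $\omega$-limit set under forward iteration (equivalently, one passes to a two-sided extension inside $X$). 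Once this is observed, the combination of minimality of $X$ with the forward approximation of arbitrary $\eta\in X$ by $\sigma^n\omega$ yields the claim.
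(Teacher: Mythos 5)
The paper presents this as a ``folklore lemma'' and gives no proof of it, so there is no argument in the source to compare against. Your proof is correct and supplies the standard argument: the reformulation of almost periodicity of $\omega^+$ as the existence, for every $m$, of an $L(m)$ such that every length-$L(m)$ window of $\omega^+$ contains all length-$(2m+1)$ subwords of $\omega^+$; this gives minimality of the subshift $X$ of sequences whose subwords all occur in $\omega^+$, and then the identification of $X$ with the forward limit set of $\omega$.

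Two small remarks. First, you define $X$ by the one-sided inclusion ``every subword of $\xi$ is a subword of $\omega^+$,'' whereas the lemma asserts the orbit closure equals the set of sequences with \emph{exactly} the same subwords as $\omega^+$. Your $L(m)$ argument, applied with $\xi$ in the role of the long window, already shows that every subword of $\omega^+$ occurs in every $\xi\in X$, so the two sets coincide; it would be worth saying this explicitly so that the stated conclusion is actually reached. Second, and more importantly, you correctly flag that the statement as literally written is false if the left tail of $\omega$ is genuinely arbitrary: $\omega$ itself lies in $\overline{\cO_\sigma(\omega)}$ but in general not in $X$, so the full two-sided orbit closure can properly contain the minimal set $X$. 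Your reading of $\overline{\cO_\sigma(\omega)}$ as the $\omega$-limit set $\Omega(\omega)$ is the intended one, and is confirmed by the paper's own subsequent notation $\cM(\omega^+)=\Omega(\omega)$; your proof establishes $\Omega(\omega)=X$ cleanly. Finally, the ``diagonal extraction'' phrase is unnecessary: once $\xi_{[n_m-m,n_m+m]}=\eta_{[-m,m]}$ for each $m$, one already has $\sigma^{n_m}\xi\to\eta$ without passing to a further subsequence, and there is also no need to arrange $|n_m|\to\infty$.
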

A particular case of almost periodic sequences are Toeplitz sequences. A
sequence $\omega^+\in\Sigma^+$ ($\xi\in\Sigma$) is called a {\em Toeplitz
  sequence} if for every $j\in\N$ ($j\in\Z$) there exists $p\in\N$ so that
$\omega^+_{j+np}=\omega^+_j$ for all $n\in\N$ ($\omega_{j+np}=\omega_j$ for all
$n\in\Z$). In other words, every entry of a Toeplitz sequence occurs
periodically. However, since the periods depend on the position, the sequence
itself need not be periodic. In fact, aperiodicity is often included in the
definition, and we will follow this convention here.

\section{Realisation of rotation sets by Toeplitz sequences }

\subsection{Preliminary notions}

We fix $f\in\homtwo$ such that $f$ has a rotational horseshoe $C$ with three
symbols and displacement vectors $v_0=(0,0),\ v_1=(1,0)$ and $v_2=(0,1)$, as in
Figure~\ref{fig.RotationalHorseshoe}. Thus, there exists a bounded topological disk $D\ssq\R^2$ and a
partition $\mathcal{R}=\{R_0,R_1,R_2\}$ of $C$ with $\bigcup_{i=0}^2 R_i\ssq
\pi(D)$ such that $F(\pi^{-1}(R_i)\cap D) \ssq D+v_i$. As before, we denote by
$h_{\cR}$ the conjugacy between the shift $\sigma$ on $\Sigma:=\{0,1,2\}^{\Z}$
and $f_{|C}$. As we will see in Section~\ref{Abundance}, the family of such maps
is open and dense in the set $\cF\ssq\homtwo$ of torus homeomorphisms with
non-empty interior rotation sets. According to Corollary~\ref{c.symbolic_rotset}
and Proposition~\ref{propextension}, our aim is to construct almost periodic sequences whose
associated rotation sets are line segments of positive length, separate the
plane or have non-empty interior. To that end, we first introduce a general
block structure which produces Toeplitz sequences through an inductive
construction. \smallskip

{\em A general block structure.}  Suppose $(b_n)_{n\in\N}$ and $(d_n)_{n\in\N}$
are sequences of positive integers, with $d_{n+1}$ a multiple of $d_n$ for all
$n\in\N$. Let $a_1\in\N$. Slighly abusing notiation, we denote by $[k,l]$ the
interval of all integers $i$ with $k\leq i \leq l$, similarly for open and
half-open intervals.  Then we recursively define
\begin{itemize}
\item $a_{n+1}=(b_nd_n+1)a_n$
\item $\cA_n=[1,a_n]+d_na_n\N$
\item $\cB_n=\bigcup_{i=1}^n \cA_n$
\item $\cC_n=\cB_n\setminus \cB_{n-1}$
\end{itemize}
We call the  maximal intervals in $\cA_n$ \textit{blocks of level $n$}. If such
a block is not equal to the first block $[1,a_n]$, we call it a {\em repeated
  block}.  The following facts are easy to check. 
\begin{itemize}
\item[(F1)] Given $k<k'$, any block of level $k'$ starts and ends with a block of
level $k$.
\item[(F2)] If two blocks of levels $k$ and $k'$ are disjoint and $k\leq k'$,
  then the interval between the blocks has length $\geq (d_k-1)a_k$.
\item[(F3)] The asymptotic density of $\cB_n$ is at most $\delta_n=\sum_{j=1}^n\frac{1}{d_j}$.
\item[(F4)] If $J$ is an interval of integers whose length is a multiple of
  $a_nd_n$, then $\frac{1}{\mid J\mid }|J\cap \cB_n|\leq
  \delta_n$. Consequently, given $M\in\N$ and any interval $J'$ of length $\geq
  a_nd_n/M$ we have $\frac{1}{\mid J'\mid}|J'\cap \cB_n|\leq M\delta_n$. Here
  $|J|$ denotes the cardinality of a set $J\ssq\N$.
\item[(F5)] If a sequence $\omega=(a_i)_{i\in\N}$ is chosen so that for all $n\in\N$, $j\in[1,a_n]$ and $k\in\N$
it satisfies $a_{j+ka_nd_n}=a_j$, then $\omega$ is Toeplitz.
\end{itemize}
We let $\delta_{\infty}=\lim_n\delta_{n}=\sup_n\delta_{n}$.
\smallskip

\subsection{Line segments}

We first need to specify the open set $V\ssq \R^2$ in
Theorem~\ref{t.mainresult}. In principle, we could take the whole interior of
the simplex $\Delta$ spanned by the vectors $v_0,v_1$ and $v_2$ defined
above. However, for the sake of convenience we let $\alpha = \alpha(v) :=
\left\langle v_1, \frac{v}{\|v\|}\right\rangle,\ \beta=\beta(v) := \left\langle
  v_2, \frac{v}{||v||} \right\rangle$ and define $V$ as the subset of vectors in
$\Delta$ for which $\|v\|\leq\min\{\alpha,\beta\}$, which will simplify our
construction below to some extent.

Given $\omega^+\in\Sigma^+$, we denote by $\cM(\omega^+)=\Omega(\omega)$ the
omega-limit set of a sequence $\omega\in\Sigma$ whose right side coincides with
$\omega^+$. According to Proposition~\ref{propextension}, $\cM(\omega)$ is a
minimal set, and the subwords of sequences in $\cM(\omega)$ are exactly the
subwords of $\omega^+$. Given $v\in V$, our aim is now to construct a one-sided
sequence $\omega_v= (\omega_v(j))_{j \in \N}$ such that $\rho_{\cM(\omega_v)}$
is a line segment of positive length contained in $v+\R v^\perp$. To that end,
we use the above general block structure with the following specifications. We
let $b_n=1$ and $d_n = 2^{n + t}$ for some integer $t$ such that $\delta_\infty
\leq \frac{\|v\|}{10\max\{\alpha,\beta\}}$. We start the construction
with an integer $a_1\geq 2M/\|v\|+1$. where
$M=\|v\|+\max\{\alpha,\beta\}$. Further, we let $D(l,j)= \left\langle
  \sum_{i=l}^j v_{\omega_v(i)},\frac{v}{\|v\|}\right\rangle-(j-l+1)\|v\|$. The
sequence $\omega_v$ will be constructed by induction on the sets $\cC_n$. To
that end, we first define $\omega_v$ on $[0,a_n]\cap \cC_n$ and then extend it
to the whole of $\cC_n$ by $a_nd_n$-periodic repetition. On $[0,a_n]$, we choose
the entries $\omega_v(j)$ by induction on $j$ according to the following rules.
\begin{itemize}
\item[(I)] If $n$ is odd, we let $\iota=1$, if $n$ is even we let $\iota=2$.
\item[(II)] If neither of $j,j+1\ld j+K$ intersects a block of level $<n$, then
  we choose $\omega_v(j)\in\{0,\iota\}$ such that $D(1,n)$ is contained
  in the interval $[0,M]$. If this is true for both possible choices $0$ and
  $\iota$, we let $\omega_v(j)=0$.
\item[(III)] If $B=[m+1,m+a_k]\ssq [0,a_n]$ is a block of level $k<n$ which is
  not contained in a larger block of level $<n$, then we choose
  $\omega_v(m-K+1)\ld\omega_v(m)$ such that $D(1,j)\in [-M,M]$ for all $j=m-K+1\ld
  m$ and $D(1,m)\in [-D(1,a_k),M-D(1,a_k)]$. In order to make this choice unique,
  we require in addition that $D(1,j)$ always takes the smallest value which is
  possible under these conditions. This means we put 0 whenever possible, and
  $\iota$ only when necessary.
\end{itemize}
In order to see that these rules are consistent, note that if $\omega_v(j)=0$,
then $D(1,j)=D(1,j-1)-\|v\|$, if $\omega_v(j)=1$ then $D(1,j)=D(1,j-1)+\alpha$
and if $\omega_v(j)=2$ then $D(1,j)=D(1,j-1)+\beta$. In each step, we therefore
have the choice to either increase or decrease the value of $D(1,j)$. Thus, if
$D(1,j-1)\in[0,M]$, then due to the choice of $M$ we can always choose
$\omega_v(j)$ in such a way that $D(1,j)\in[0,M]$ as well. Since rule (III)
ensures that $D(1,j-1)\in[0,M]$ whenever $j-1$ is the end of a block of level
$<n$, it is possible to follow rule (II) whenever it applies. If $j=m-K+1$,
where $m+1$ is the starting point of a block of level $<n$ and $D(1,j-1) \in
[0,M]$, then choosing $\omega_v(i)=0$ for all $i=j\ld m$ would yield $D(m)\leq
M-K\|v\|\leq -M$. Thus, by replacing some of the zeros with $\iota$'s, it is
also possible to meet the requirements of rule (III). Note here that due to the
choice of $K=a_1-1$ and the spacing of the blocks, the integers $j\ld m$ are
not contained in any repeated block of level $<n$. Altogether, this implies that
the above algorithm yields a well-defined sequence $\omega_v$. Furthermore, by
construction we obtain that $|D(1,j)|\in [0,M]$ whenever $j$ is not contained in
a repeated block.

In order to ensure that $\rho_{\overline{\cO_\sigma(\omega_v)}} \ssq v+\R v^\perp$,
we need to show that 
\begin{equation}
  \nLim \ntel \max\left\{ |D(i,j)| \mid |j-i|\leq n\right\} \ = \ 0 \ .
\end{equation}
Since the $a_n$ grow super-exponentially, this will be a direct implication of
the following.
\begin{prop}\label{p:is}
 If $0<j-i\leq a_n$, then $|D(i,j)| \leq 2nM+1$.
\end{prop}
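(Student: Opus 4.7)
I would argue by induction on $n$, with the key auxiliary step being the bound $|D(1,k)|\leq nM$ for all $k\in[0,a_n]$, proved by a parallel induction on $n$. The base case ($n=1$) is immediate from rule~(II), which forces $D(1,k)\in[0,M]$ throughout $[0,a_1]$. For the inductive step, I would decompose $[1,a_n]$ into the first level-$(n-1)$ block $[1,a_{n-1}]$, the gap $[a_{n-1}+1,a_{n-1}d_{n-1}]$, and the second level-$(n-1)$ block. The inductive hypothesis handles the first region directly, giving the bound $(n-1)M$. For the third, I would combine the copy property (which reduces $\omega_v$ on the second level-$(n-1)$ block to $\omega_v|_{[1,a_{n-1}]}$) with rule~(III) applied at level $n$ to that sub-block, which enforces $|D(1,a_{n-1}d_{n-1})|\leq M$; this yields $|D(1,k)|\leq M+(n-1)M=nM$. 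For the gap region, I would iterate the same argument on the top-level sub-blocks of smaller level that lie inside the gap.

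With the auxiliary bound in hand, the identity $D(i,j)=D(1,j)-D(1,i-1)$ immediately yields $|D(i,j)|\leq 2nM$ whenever $[i,j]\subseteq[1,a_n]$, and by the copy property this extends to $[i,j]$ contained in any level-$n$ block. For intervals straddling a level-$n$ block boundary, I would split $[i,j]$ at the boundary and apply the bound to each piece, using the analogous auxiliary bound for the first $a_n$ positions of the adjacent level-$n$ gap (derived from the level-$(n+1)$ construction) together with the boundary condition $D(1,a_nd_n)\in[-D(1,a_n),M-D(1,a_n)]$ from rule~(III) to prevent error accumulation. The additive $+1$ in the stated bound absorbs minor off-by-one effects from the discrete algorithm.

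The main obstacle is the gap sub-case of the auxiliary bound. A level-$(n-1)$ gap contains top-level sub-blocks of various levels $k<n-1$ nested inside sub-gaps of sub-gaps, and rule~(III) at level $n$ controls only the outermost of these directly. One must argue, using the copy property at each intermediate level, that rule~(III) applied at the appropriate level supplies the needed boundary control for every sub-block in the nested hierarchy, and that the cumulative error from these applications remains bounded by $M$ per nesting level rather than accumulating multiplicatively. Tracking these applications of rule~(III) across the entire block hierarchy is the core technical challenge.
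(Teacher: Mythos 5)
Your auxiliary bound $|D(1,k)|\leq nM$ for $k\in[1,a_n]$ is indeed correct and is established in the paper as a corollary of its Lemma~3.2 (which proves the sharper statement $|D(1,j)|\leq M\,\mathrm{depth}(j)$). Your deduction of $|D(i,j)|\leq 2nM$ for intervals $[i,j]\subseteq[1,a_n]$ from the identity $D(i,j)=D(1,j)-D(1,i-1)$ is also fine, as is the extension to intervals contained in a single level-$n$ block via the copy property. The problem is everything after that, and you have correctly located it yourself: the case of intervals that are \emph{not} contained in any level-$n$ block. In fact you understate its scope. The gap between consecutive level-$n$ blocks has length $a_n(d_n-1)$, vastly longer than $a_n$, so for ``most'' positions an interval of length $\leq a_n$ is entirely inside such a gap, not merely straddling a boundary. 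The gap is not a copy of anything: it contains level-$k$ blocks for $k<n$, scattered recursively inside sub-gaps, with fresh symbols chosen by rules~(II)--(III) at levels up to $n$ and beyond in between. Your proposed remedy --- split at a boundary, apply the bound on each side, and invoke rule~(III) to prevent accumulation --- does not resolve this recursion; it just pushes the difficulty one level down, and you acknowledge as much (``tracking these applications of rule~(III) across the entire block hierarchy is the core technical challenge''). As stated, there is no mechanism that prevents the error from growing with the ambient level $m$ (where $[i,j]\subseteq[1,a_m]$ with $m$ possibly much larger than $n$) rather than staying bounded by a function of $n$ alone.

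The paper's resolution is precisely the missing concept: it defines the \emph{depth} of a position $j$ as the maximal length of a nested chain $B_1\supsetneq B_2\supsetneq\cdots\supsetneq B_d$ of blocks containing $j$ with strict inclusions at both ends, and proves $|D(1,j)|\leq M\,\mathrm{depth}(j)$. Depth, unlike the ambient level, is small when $j$ is not deeply nested. The proposition is then proved in contrapositive form: fixing $k$ and inducting on the level $n$ of the ambient block, one assumes $|D(i,j)|>2Mk+1$, deduces from $|D(i,j)|\leq M(\mathrm{depth}(j)+\mathrm{depth}(i-1))$ that one endpoint has depth exceeding $k$, and then uses the geometry of nested repeated blocks (two disjoint blocks of level $\geq k$ are at least $(d_k-1)a_k$ apart, by (F2)) to conclude $j-i>a_k$. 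This replaces your explicit boundary-by-boundary bookkeeping with a single scalar invariant that is stable under the copy maps, which is exactly what lets the bound depend on the interval length and not on its location. Without a substitute for this idea, your argument does not close.
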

For the proof, we need to introduce some further notation.  We say that $j \in
\N$ has \emph{depth} $d$, and write $\mathop\mathrm{depth}(j) = d$, if $d$ is
the maximal integer such that $j \in B_d$ and $B_1 \supsetneq B_2 \supsetneq
\dots \supsetneq B_d$ is a nested sequence of blocks with $\min B_i < \min
B_{i+1}$ and $\max B_i > \max B_{i+1}$ for all $i=1,\dots,d-1$. Note that the
nested sequence could be given by only one block $B_1=[1,a_n]$, but it always
exists since every integer is contained in some initial block. For the same
reason, $B_1$ will always be an initial block and $B_2$ is the largest repeated
block that contains $j$. Note also that the level of the blocks is decreasing,
and if $j \in [1, a_n]$ then $\mathop\mathrm{depth}(j) \leq n$. Moreover, if $n$
is the smallest integer such that $j\in [1,a_n]$, then $B_1$ is equal to $[1, a_n]$.

\begin{lemma}\label{l5}
  We have $|D(1,j)| \leq M\mathop\mathrm{depth}(j)$ for all $j\in\N$. In
  particular $|D(1,j)|\leq Mn$ for all $j \in [1, a_n]$.
\end{lemma}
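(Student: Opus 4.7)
I would argue by induction on $d = \mathop\mathrm{depth}(j)$. The base case $d = 1$ is precisely the observation made just before the lemma: if $j$ lies in no repeated block, rules (II) and (III) of the construction of $\omega_v$ keep $D(1, j) \in [-M, M]$. For the inductive step, fix a maximal nested chain $B_1 \supsetneq \cdots \supsetneq B_d$ containing $j$, and let $B_d = [m + 1, m + a_k]$ be the innermost (necessarily repeated) block, of level $k$; thus $m$ is a positive multiple of $a_k d_k$. The crucial structural fact is a self-similarity of $\omega_v$: because the values assigned at each level $l \leq k$ are $a_l d_l$-periodic and $a_l d_l$ divides $a_k d_k$ (hence $m$), one has $\omega_v(m + i) = \omega_v(i)$ for every $i \in [1, a_k]$. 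Setting $j' := j - m \in [1, a_k]$ then yields the decomposition $D(1, j) = D(1, m) + D(1, j')$.

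I would bound the two summands as follows. First, $|D(1, m)| \leq M$ by rule (III) in the construction, which places $D(1, m) \in [-D(1, a_k), M - D(1, a_k)]$, combined with the preliminary fact that $D(1, a_k) \in [0, M]$ for every $k$ (established by a side induction on $k$ using rule (III) applied to the right-attached level-$(k-1)$ block $[a_k - a_{k-1} + 1, a_k]$). Second, one aims for $|D(1, j')| \leq M(d - 1)$ via the inductive hypothesis, which for $d \geq 3$ reduces to showing $\mathop\mathrm{depth}(j') \leq d - 1$. If a chain $C_1 \supsetneq \cdots \supsetneq C_e$ witnessed $\mathop\mathrm{depth}(j') \geq d$, a short divisibility argument --- the fractional part of $a_k / (a_l d_l)$ equals $1/d_l$ for $l < k$, so no block of level $< k$ straddles $a_k$ --- forces every $C_i$ for $i \geq 2$ to lie in $[1, a_k]$, with $C_i$ strictly interior for $i \geq 3$. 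The shifts $m + C_i$ then form a strictly nested tail extending $B_1 \supsetneq \cdots \supsetneq B_d$, producing a chain for $j$ of length $d + (e - 2) \geq 2d - 2$, which contradicts maximality once $d \geq 3$.

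The main obstacle is the case $d = 2$, where the above only gives $\mathop\mathrm{depth}(j') \leq 2$: the block $C_2$ may be right-attached in $[1, a_k]$ ($\max C_2 = a_k$), and the shift $m + C_2$ then fails to nest strictly in $B_d$. I would handle this by a telescoping cancellation. Writing $j' = (a_k - a_{l_2}) + j''$ with $j'' \in [1, a_{l_2}]$ and using the identity $D(1, a_k) = D(1, a_k - a_{l_2}) + D(1, a_{l_2})$, one finds
\begin{equation*}
D(1, j) \ = \ \bigl[ D(1, m) + D(1, a_k) \bigr] \ - \ D(1, a_{l_2}) \ + \ D(1, j''),
\end{equation*}
where the bracketed sum lies in $[0, M]$ and $D(1, a_{l_2}) \in [0, M]$, so $|D(1, j)| \leq M + |D(1, j'')|$. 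The chain-extension argument again rules out $j''$ occupying any strictly interior block of $[1, a_{l_2}]$ (else $\mathop\mathrm{depth}(j) \geq 3$), so any repeated block containing $j''$ is itself right-attached; the peel-and-cancel step recurses with strictly decreasing levels, the intermediate $D(1, a_{l_r})$-terms telescope, and after finitely many steps $j^{(r)}$ lies outside every repeated block. One concludes $|D(1, j)| \leq 2M = Md$; the second assertion then follows from $\mathop\mathrm{depth}(j) \leq n$ for $j \in [1, a_n]$.
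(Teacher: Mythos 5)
Your decomposition is made at the wrong block. You split $D(1,j) = D(1,m) + D(1,j')$ with $m+1 = \min B_d$ the start of the \emph{innermost} block of a maximal chain, whereas the paper splits at the \emph{outermost} repeated block $B_2 = [m+1,m+a_k]$. This matters because for $d \geq 3$ your $m$ lies inside $B_{d-1}$, which is itself a repeated block, so neither the remark preceding the lemma ($|D(1,j)|\leq M$ when $j$ avoids repeated blocks) nor rule~(III) applies: rule~(III) is stated only for a block of level $k<n$ \emph{not contained in a larger block of level} $<n$, and $B_d \subsetneq B_{d-1}$ violates exactly that hypothesis. In fact $D(1,m)$ can genuinely be of order $(d-1)M$: writing $D(1,m) = D(1,m_2) + D(1,m - m_2)$ with $m_2+1 = \min B_2$ and iterating inward, each level contributes up to $M$, and there is no cancellation forced by the construction. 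So your claimed bound $|D(1,m)|\leq M$ fails, and the proof breaks precisely in the $d\geq 3$ range you regard as the easy case. The roles are reversed relative to what you need: in your split it is $|D(1,j')|$ that is $\leq M$ (indeed $\mathop{\mathrm{depth}}(j')\leq 1$) and $|D(1,m)|$ that may be as large as $M(d-1)$; to close the argument you would have to show $\mathop{\mathrm{depth}}(m)\leq d-1$ and invoke the induction hypothesis on $m$, not on $j'$. The paper avoids all of this by peeling off the \emph{outer} block, where the prefix position $m$ is provably outside every repeated block and the tail $j-m$ has depth reduced by one via exactly the chain-prepending you describe.

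A secondary, smaller point: your chain count $d + (e-2)$ should be $d + (e-1)$. The definition of a nested chain requires $\min C_1 < \min C_2$ and $\max C_1 > \max C_2$ strictly, so $C_2$ can never share an endpoint with $C_1$; hence $m + C_2$ always nests strictly inside $B_d$, and appending $m+C_2,\dots,m+C_e$ to $B_1\supsetneq\cdots\supsetneq B_d$ yields a chain of length $d+e-1$. This already forces $e\leq 1$ for every $d\geq 1$, so the right-attached configuration and the telescoping workaround for $d=2$ are both unnecessary — though fixing this does not repair the $|D(1,m)|\leq M$ gap.
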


\begin{proof}
  We prove the lemma for all $j \in [1, a_n]$ by induction on $n$. The statement
  holds for $j \in [1,a_1]$, since on this interval we apply rule II to all $j$
  and consequently $D(1,j)\in[0,M]$. Assume that the estimate holds for all $j
  \in [1, a_n]$ and let $j' \in [1, a_{n+1}]$. If $\mathop\mathrm{depth}(j') =
  1$, the statement holds by construction.  Note here that if $\omega_v(j)$ is
  chosen according to rule II, then $|D(1,j)| \in [0,M]$, whereas if we apply
  rule III then $|D(1,j)| \in [-M,M]$.

  Now, assume that $\mathop{depth}(j')=d$ and the block $B_2 = [m+1,
  m+a_k] \subsetneq [1, a_{n+1}]$ is of level $k \leq n$. Then $m$ is not
  contained in any block and thus $|D(1,m)| \leq M$ again by
  construction. Further, we have $\mathop{depth}(j'- m) \leq
  \mathop\mathrm{depth}(j') - 1$, and consequently
$$|D(m+1,j')| \ = \  |D(1, j'-m)|\  \leq \ M(\mathop\mathrm{depth}(j') - 1) \ .$$
Note that here $\omega_v(m+1),\dots,\omega_v(m+a_k) = \omega_v(1), \dots, \omega_v(a_k)$. Together,
we obtain
$$|D(1,j')| \ \leq \ |D(1,m)| + |D(m+1,j')| \ \leq \ C\mathop\mathrm{depth}(j')$$
as required.
\end{proof}

\begin{proof}[Proof of Proposition \ref{p:is}.]
  Fix $k > 0$. We proceed again by induction on $n$. Assume that the statement
  holds for all $i,j \in [1, a_{l}]$, $l \leq n$, and suppose that $i,j \in [1,
  a_{n+1}]$ with $|D(i,j)| > 2Mk+1$. We have to show that $j-i > a_k$.

  If $i, j$ are both contained in a repeated block $B = [m+1, m+a_{p}]$ of level
  $p \leq n$, then $|D(i,j)| = |D(i-m, j-m)|$ and the induction statement
  applies. Thus, we may assume that this does not happen. Due to Lemma~\ref{l5}
  we have
$$|D(i,j)| \ \leq \ |D(1,j)| + |D(1,i-1)| \ \leq \ M(\mathop{depth}(j) + \mathop{depth}(i-1)) \ ,$$
so that either $j$ or $i-1$ has depth bigger than $k$. We distinguish three
cases.

First, if both have depth bigger or equal to $k$, then as they cannot both be
contained in a single repeated block, they have to be contained in disjoints
blocks of level bigger than or equal to $k$. However, as two such blocks are at
least $(d_{k}-1)a_k$ apart, the statement follows.

Secondly, assume that $d = \mathop{depth}(i-1) > k$ and $\mathop{depth}(j) \leq
k$.  Let $B_1 \supsetneq \dots \supsetneq B_d$ be a nested sequence of blocks as
in the definition of $\mathop{depth}(i-1)$, with $B_1 = [1, a_{n+1}]$ and $B_2 =
[m+1 ,m+a_p]$.  Since $\mathop{depth}(i-1) > k$, we have $p \geq k$.  In the
case $i-1 \notin [m + a_p - a_k + 1, m + a_p]$, we have $j - i > a_k$ as
required.  Otherwise, we have that
\begin{eqnarray*} \lefteqn{|D(i, m + a_p)| \ = \
 | D(i - m - a_p + a_k, a_k)|} \\ & = & |D(1,
  a_k)| +|D(1, i - m - a_p + a_k - 1)| \ \leq \ Mk+1 \ , 
\end{eqnarray*}
using $D(1,a_k)\in [0,M]$ and Lemma~\ref{l5}.  Consequently, we obtain
\begin{eqnarray*}
  |D(i,j)| & \leq & |D(i, m + a_s) + D(m + a_s + 1, j)|
 \\ & = & |D(i, m + a_s) - D(1, m + a_s) + D(1, j)| \\
& \leq & Mk + 1 + M\mathop\mathrm{depth}(j) \ \leq \ 2Mk + 1 \ ,
\end{eqnarray*}
contradicting our assumption.  Finally, the case $\mathop{depth}(i-1) \leq k$
and $\mathop\mathrm{depth}(j) > k$ can be treated in an analogous way.
\end{proof}

As mentioned above, Proposition~\ref{p:is} implies that $\rho_{\cM(\omega_v)}
\ssq v+\R v^\perp$, and if we let $M_v=h_{\cR}^{-1}(\cM(\omega_v))$, then
according to Corollary \ref{c.symbolic_rotset} the same will be true for the
rotation set $\rho_{M_v}(F)$. It remains to show that $\rho_{\cM(\omega_v)}$ is
a segment of positive length. To that end, we note that for for all $n\in\N$ and
$j\in[1,a_n]\cap C_n$ we have $\omega_v(j)\in\{0,\iota\}$, where $\iota=1$ if
$n$ is odd and $\iota=2$ if $n$ is even. In the first case, (F4) implies that
the fraction of $2$'s in the interval $[1,a_n]$ is bounded by
$\delta=\frac{\|v\|}{10\max\{\alpha,\beta\}}$. At the same time, the requirement
that $D(1,a_n)\in[0,M]$ implies that a proportion of
$\|v\|/\max\{\alpha,\beta\}$ of symbols in $[1,a_n]$ must be non-zero. This
yields that the frequency of 1's in $[1,a_n]$ is greater than $9\delta$. For
even $n$, we obtain exactly the opposite estimates for the frequencies of $1$'s
and $2$'s. In the limit $n\to\infty$, this yields the existence of two distinct
vectors in $\rho_{\cM(\omega_v)}$. This completes the proof of
Theorem~\ref{t.mainresult}(a).

\subsection{Plane separating continua}

For the construction we make again use of the general block structure presented
above, this time with the following specifications.
\romanlist
\item We choose $(d_n)_{n\in\N}$ so that all $d_n$ are even and
  $\delta_{\infty}\leq \frac{1}{32}$.
\item We choose integers $K\geq 17$ and $L\geq 64$ and let $a_1 = (3L+ 4)K$
  and $b_n = (3L + 4)K$ for all $n\in\N$. The sequence $\nfolge{a_n}$ is then defined
  inductively by $a_{n+1}=(b_nd_n+1)a_n$, according to the general block
  structure introduced above.
\item Note that due to the choice of $b_n$ we have $a_{n+1}\geq 8a_nb_n$ for all
  $n\in\N$, which implies in particular that $\sum_{j=1}^{n-1}a_jd_j\leq
  a_nd_n/2$.  \listend

  Then we construct $\omega = (\omega_n)_{n\in\N}$ inductively on the sets $\cA_{n}$ as
  follows. Suppose $\omega_j$ is defined for all $j \in [1, a_n]$, and hence for all
  $j \in \cA_n$ (recall $\omega_{j+ka_nd_n} = \omega_j$ for all $j \in [1, a_n]$ and $k \in
  \N$). We extend the definition to $[1, a_{n+1}]$, and thus to $\cA_{n+1}$, as
  follows. Let
\begin{equation}
 \begin{split}
   p_n & = (L+1)Ka_nd_n - a_nd_n +1 +\sum_{j=1}^{n} a_jd_j/2 \ , \\ q_n & = \
   (L+1)Ka_nd_n + a_nd_n - \sum_{j=1}^{n} a_jd_j/2 - 1 \ .
\end{split}
\end{equation}
Then divide $[1, a_{n+1}]$ into the following seven intervals (see Figure
\ref{fig:intervals1}).
\begin{itemize}
\item[] $I^0_1=[1, (LKd_n+1)a_n]$,
\item[] $I^1_1=[(LKd_n+ 1)a_n+1,p_n - 1]$,
\item[] $I_1^2=[p_n, q_n]$,
\item[] $I^1_2 = [q_n + 1, (L + 2)Kd_na_n]$,
\item[] $I^0_2=[(L + 2)K d_na_n + 1, ((2L + 2)Kd_n + 1)a_n]$,
\item[] $I^2_2 = [((2L + 2)Kd_n + 1)a_n+1, (2L + 4)K d_na_n]$,
\item[] $I^0_3=[(2L + 4)K d_na_n +1, a_{n+1}]$.
\end{itemize}
Due to the choice of $p_n$ and $q_n$, the following properties are easy to
verify.

\begin{itemize}
\item[(PQ1)] The intervals $I^0_1,I^0_2$ and $I^0_3$ all have the same length
  $(LKd_n+1)a_n$ and start and end with a block of level $n$ (and thus with
  blocks of all levels $k\leq n$).
\item[(PQ2)] The length of $I^1_1$ and $I^1_2$ is between $(K-1)a_nd_n$ and
  $Ka_nd_n$.\\
{\em (Note here that due to (iii) we have $\sum_{j=1}^{n-1} a_jd_j/2 \leq a_nd_n/4$.) }
\item[(PQ3)] The length of $I_1^2$ is between $a_nd_n/2$ and $a_nd_n$.
\item[(PQ4)] The interval $I_1^2$ is concentric around a block $B_n$ of level $n$. 
\item[(PQ5)] The distance of $p_n$ and $q_n$ to any block $B_k$ of level $k\leq
  n$ is at least $a_kd_k/4$.\\
  {\em (In order to check this for $p_n$, note that for each $k \leq n$ a block
    of level $k$ starts at $(L+1)Ka_nd_n-a_nd_n+1+\sum_{j=k+1}^n a_jd_j/2$ and
    $\sum_{j=1}^k a_jd_j/2 \leq 3a_kd_k/4$. A similar comment applies to $q_n$.) }
\end{itemize} 
Let $I^0 = I^0_1 \cup I^0_2 \cup I^0_3$, $I^1 = I^1_1 \cup I^1_2$, $I^2 = I_1^2
\cup I^2_2$ and $I^* = I^1_1 \cup I_1^2 \cup I^1_2$.  
\begin{itemize}
\item[(PQ6)] The intervals $I^*$ and $I^2_2$ both have length $(2Kd_n-1)a_n$.
\end{itemize}

\begin{figure}[ht]
\unitlength=2.5mm
\begin{picture}(15,9)(15,2)
\put(-8,6){\line(1,0){64}}
\put(-8,5.5){\line(0,1){1}}
\put(8,5.5){\line(0,1){1}}
\put(0,7){$I^0_1$}
\put(9.5,7){$I_1^1$}
\put(12,7){$I_1^2$}
\put(14,7){$I^1_2$}
\put(12,5.5){\line(0,1){1}}
\put(23,7){$I_2^0$}
\put(13,5.5){\line(0,1){1}}
\put(35,7){$I_2^2$}
\put(47,7){$I_3^0$}
\put(16,5.5){\line(0,1){1}}
\put(32,5.5){\line(0,1){1}}
\put(40,5.5){\line(0,1){1}}
\put(56,5.5){\line(0,1){1}}
\put(-8,5){$\underbrace{\qquad \qquad \quad \quad \quad \quad \quad \quad \quad \, }_{(LKd_n+1)a_n}$}
\put(8.2,5){$\underbrace{\qquad \quad \quad \quad \ }_{(2Kd_n-1)a_n}$}
\put(16.2,5){$\underbrace{\qquad \qquad \quad \quad \quad \quad \quad \quad \quad }_{(LKd_n+1)a_n}$}
\put(32.2,5){$\underbrace{\quad \quad \quad \quad \quad \  }_{(2Kd_n-1)a_n}$}
\put(40.2,5){$\underbrace{\qquad \qquad \quad \quad \quad \quad \quad \quad \quad }_{(LKd_n+1)a_n}$}
\put(8.2,9){$\overbrace{\qquad \quad \quad \quad\  }^{I^*}$}
\end{picture}
\caption{The configuration of intervals.
\label{fig:intervals1}}
\end{figure}
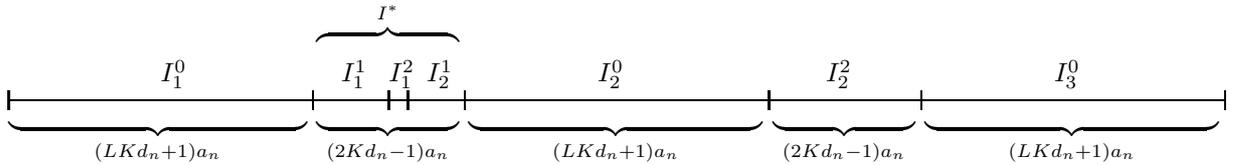

We define
\begin{equation} \label{e.plane-separating_def}
\omega_j = \ \begin{cases}
0 \ \mbox{ if } \ j\in I_0\setminus \cB_n\\
1 \ \mbox{ if } \ j\in I_1\setminus \cB_n\\
2 \ \mbox{ if } \ j\in I_2\setminus \cB_n\\
\end{cases}
\end{equation}
for all $j \in [1,a_n] \setminus \cB_n$ and $\omega_{j+ka_nd_n} = \omega_j$ for all $j \in
[1, a_n]$ and $k \in \N$. By induction on $n\in\N$ this yields a sequence
$\omega = (\omega_j)_{j \in \N}$, which follows our general block structure
introduced above and is, in particular, Toeplitz. \medskip

Recall that $v_0 = (0, 0)$, $v_1 = (1, 0)$ and $v_2 = (0, 1)$ are the integer
vectors associated to the partition and for every interval $J\ssq \N$ we
write $\rho(J) = \frac{\psi(J)}{|J|}$, where $\psi(J) = \sum_{j \in J}v_{\omega_j}$.

\begin{lemma}\label{l2}
$\rho([1, a_{n+1}]) \in B_{\frac{1}{8}}(v_0)$ for all $n \in \N_0$.
\end{lemma}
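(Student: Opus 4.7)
The plan is to decompose $\psi([1, a_{n+1}])$ according to whether positions belong to $\cB_n$ or not, then bound each contribution separately using the length information for the seven intervals $I_1^0, I_1^1, \dots, I_3^0$ together with the density estimate (F4).

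First I would split
\[
\psi([1, a_{n+1}]) \ = \ \sum_{j \in [1, a_{n+1}] \setminus \cB_n} v_{\omega_j} \ + \ \sum_{j \in \cB_n \cap [1, a_{n+1}]} v_{\omega_j}\ .
\]
On $[1, a_{n+1}] \setminus \cB_n$, the defining rule \eqref{e.plane-separating_def} prescribes $\omega_j = i$ exactly when $j \in I^i$, so the first sum equals $|I^1 \setminus \cB_n|\cdot v_1 + |I^2 \setminus \cB_n|\cdot v_2$, a vector in the first quadrant of $\R^2$ whose Euclidean norm is bounded by $|I^1| + |I^2|$. For the second sum, the values $\omega_j$ were set at earlier stages of the construction and cannot be controlled directly, but since $\|v_i\| \leq 1$ for all $i \in \{0, 1, 2\}$, its norm is at most $|\cB_n \cap [1, a_{n+1}]|$.

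Next I would compute the two sizes. Property (PQ6) gives $|I^*| = |I_2^2| = (2Kd_n - 1)a_n$, and since $I^1 \cup I^2 = I^* \cup I_2^2$, this yields $|I^1| + |I^2| = (4Kd_n - 2)a_n$. For the density of $\cB_n$, I would observe that $a_{n+1} = ((3L + 4)Kd_n + 1)a_n \geq a_nd_n$, so (F4) applied with $M = 1$ delivers $|\cB_n \cap [1, a_{n+1}]| \leq \delta_n a_{n+1} \leq \delta_\infty a_{n+1} \leq a_{n+1}/32$.

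Dividing by $a_{n+1} \geq (3L+4)Ka_nd_n$ and substituting the standing choices $L \geq 64$, $K \geq 17$, the triangle inequality then gives
\[
\|\rho([1, a_{n+1}]) - v_0\| \ \leq \ \frac{(4Kd_n - 2)a_n}{a_{n+1}} + \frac{1}{32} \ \leq \ \frac{4}{3L + 4} + \frac{1}{32} \ \leq \ \frac{4}{196} + \frac{1}{32} \ < \ \frac{1}{8}\ ,
\]
which is the conclusion $\rho([1, a_{n+1}]) \in B_{1/8}(v_0)$. There is no genuine obstacle here: the argument is essentially bookkeeping, and the constants $K$, $L$, $\delta_\infty$ fixed in the preliminaries were evidently tuned precisely so that the $I^0$-positions dominate $I^1 \cup I^2 \cup \cB_n$ by the margin required for this strict inequality.
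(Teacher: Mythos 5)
Your proof is correct and follows essentially the same route as the paper's: observe that nonzero symbols in $[1,a_{n+1}]$ can only occur on $\cB_n$ or on $I^1\cup I^2$, bound the relative size of the former by $\delta_\infty\leq 1/32$ via (F4) and of the latter by roughly $4/(3L+4)\leq 1/16$ via the interval lengths, and conclude by the triangle inequality. Your version merely makes the length computation for $I^1\cup I^2$ explicit through (PQ6), which the paper asserts directly.
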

\begin{proof}
  With the above notions, we have that $\{ j \in [1,a_{n+1}]: \omega_j \neq 0 \}
  \subseteq ([1, a_{n+1}]\cap \cB_n) \cup I^1 \cup I^2$.  By (F4), we have
  $|[1,a_{n+1}]\cap \cB_n| \leq \delta_{\infty}a_{n+1} \le
  \frac{1}{16}a_{n+1}$. Further, we have that $|I^1 \cup I^2| \leq 
  \frac{4}{3l}a_{n+1} \le \frac{1}{16}a_{n+1}$.  The statement follows.
\end{proof}

\begin{lemma}\label{l3} Suppose  $J = [1, m]$ or $J = [m, a_{n+1}]$ for some $n\in\N_0$ and $m\in(1,a_{n+1})$.
  Then $\rho(J) \in B_{\frac{1}{8}}(v_0)$.
\end{lemma}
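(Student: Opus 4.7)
The plan is to imitate the argument of Lemma~\ref{l2} by controlling the density of non-zero entries in $J$. Since $v_0=(0,0)$ and $|\psi(J)|$ is at most the number of non-zero entries in $J$, we have
\[
|\rho(J)-v_0|\ \leq\ \frac{|J\cap\cB_n|}{|J|}\ +\ \frac{|J\cap(I^1\cup I^2)|}{|J|},
\]
so it suffices to bound the two summands. I would argue by induction on $n\in\N_0$, treating only the prefix case $J=[1,m]$ since the suffix case is symmetric. The base $n=0$ is immediate under the natural convention $\omega_j=0$ on $[1,a_1]$.

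For the inductive step, if $m\leq a_n$ the claim follows directly from the inductive hypothesis. Otherwise I split according to which of the seven intervals $I^0_1,I^1_1,I^2_1,I^1_2,I^0_2,I^2_2,I^0_3$ contains $m$. In every case, the identity $I^1\cup I^2=I^*\cup I^2_2$ together with (PQ6) gives $|J\cap(I^1\cup I^2)|<4Kd_na_n$. Either $m\geq(LKd_n+1)a_n$, so that $|J|\geq(LKd_n+1)a_n$ and the density $|J\cap(I^1\cup I^2)|/|J|$ is bounded by $4/L\leq 1/16$ using $L\geq 64$; or $m<(LKd_n+1)a_n$, in which case $J\subseteq I^0_1$ and this density is zero. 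Combining with $|J\cap\cB_n|/|J|\leq\delta_\infty\leq 1/32$, which follows from F4 whenever $|J|\geq a_nd_n$, one obtains $|\rho(J)-v_0|<3/32<1/8$ as required.

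The main obstacle is the intermediate regime $a_n<|J|<a_nd_n$, where F4 does not directly bound $|J\cap\cB_n|/|J|$. Here I would write $J=[1,a_n]\cup(a_n,m]$ and apply Lemma~\ref{l2} to the first piece to get $|\psi([1,a_n])|\leq a_n/8$. On the tail $(a_n,m]$, the entries in $\cB_n$ decompose into complete blocks of various levels together with at most one partial block at each level $k<n$ at the two boundaries. The inductive hypothesis applies to each such partial block (which is a prefix or suffix of $[1,a_k]$), while Lemma~\ref{l2} at lower levels controls the complete blocks. This recursion has depth at most $n$ and is organised by the nesting structure of facts F1--F2, which keeps the bookkeeping tractable and yields the remaining estimate.
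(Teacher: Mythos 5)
Your proposal is correct and follows essentially the same strategy as the paper: induction on $n$, the F4 density bound together with length estimates for long intervals, and a block decomposition (Lemma~\ref{l2} on full blocks, inductive hypothesis on the partial block) in the short regime where F4 is too weak. The only place where you overcomplicate slightly is the description of the short regime $a_n<m<a_nd_n$: since the left endpoint $a_n+1$ is a block boundary, there is exactly one partial block, namely $J\cap B$ where $B$ is the unique maximal block of $\cB_n$ containing $m$ (if any), and the inductive hypothesis handles it in a single step -- no depth-$n$ recursion through all levels is needed. This is precisely the decomposition $J\cap\cB_n=B_1\cup\dots\cup B_j\cup B'$ used in the paper's Case~1.
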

\begin{proof}
  We proceed by induction on $n$. Assume that the statement holds for
  $n\in\N_0$. Let $ J= [1, m] \subset [1, a_{n+1}]$ (the proof for $J = [m,
  a_{n+1}]$ is similar). We distinguish several cases.\smallskip

  {\bf Case 1:} $J \subset I^0_1$. Let 
$$J \cap \cB_n = B_1 \cup \dots \cup B_j \cup B',$$
where $B' = B_{j+1} \cap J$ and $B_i$, $i = 1, \dots, j+1$, are full blocks of
level $\le n$.  By the previous lemma $\rho(B_i) \in B_{\frac{1}{8}}(v_0)$ for
$i = 1, \dots , j$. Due to the induction hypothesis $\rho(B') \in
B_{\frac{1}{8}}(v_0)$. Hence $\rho(J \cap \cB_n) \in B_{\frac{1}{8}}(v_0)$.  Since
by construction $\rho(J \setminus \cB_n) = v_0$, we have $\rho(J) \in
B_{\frac{1}{8}}(v_0)$.\smallskip

{\bf Case 2:} $J\subseteq I^0_1 \cup I^*$ but $J \not\subseteq I^0_1$. By
(F4) we have that $|I^0_1\cap\cB_n| \leq \frac{1}{16} |I^0_1|$, and hence
$\rho(I^0_1) \in B_{\frac{1}{16}}(v_0)$.  Further $|J \cap I^*| \leq
|I^*| \leq \frac{2}{l} |I^0_1| <
\frac{1}{16} |I^0_1|$.  Thus $\rho(J) \in B_{\frac{1}{8}}(v_0)$.\smallskip

The remaining cases $J \subseteq I^0_1 \cup I^* \cup I^0_2$, $J \subseteq I^0_1
\cup I^* \cup I^0_2 \cup I^2_2$ and $J \subseteq [1,a_{n+1}]$ can be treated by
similar arguments.
\end{proof}

\begin{lemma} \label{l.long_intervals} Suppose that $J\ssq I^i_j$ is an interval
  of length $\geq a_nd_n/2$, where $i=0,1,2$ and $j=1,2$ or $i=0$ and
  $j=3$. Then $\rho(J)\in B_{\frac{1}{16}}(v_i)$.
\end{lemma}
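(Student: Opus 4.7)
The plan is to leverage the fact that the construction \eqref{e.plane-separating_def} forces $\omega_j = i$ for every $j \in I^i_k \setminus \cB_n$, so that the contribution to $\psi(J)$ from positions outside $\cB_n$ is pure $v_i$. First I would split the sum defining $\psi(J)$ according to whether $j \in \cB_n$ or not, obtaining
\begin{equation*}
\psi(J) \ = \ |J \setminus \cB_n|\, v_i \ + \ \sum_{j \in J \cap \cB_n} v_{\omega_j} \ ,
\end{equation*}
which rearranges to
\begin{equation*}
\rho(J) - v_i \ = \ \frac{1}{|J|} \sum_{j \in J \cap \cB_n} (v_{\omega_j} - v_i) \ .
\end{equation*}
Thus the entire task reduces to showing that the density $|J \cap \cB_n|/|J|$ is small enough to absorb the bounded pointwise deviations $\|v_{\omega_j} - v_i\|$.

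The next step is a direct application of property (F4) with $M = 2$. The hypothesis $|J| \geq a_n d_n / 2$ matches exactly the range in which (F4) is applicable, and yields
\begin{equation*}
\frac{|J \cap \cB_n|}{|J|} \ \leq \ 2\delta_n \ \leq \ 2\delta_\infty \ \leq \ \tfrac{1}{16} \ ,
\end{equation*}
where the last inequality is guaranteed by specification (i) of the construction, namely $\delta_\infty \leq 1/32$. Combined with the pointwise bound on $\|v_{\omega_j} - v_i\|$, the triangle inequality then gives $\|\rho(J) - v_i\| \leq 1/16$, which is the claimed inclusion $\rho(J) \in B_{1/16}(v_i)$.

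I do not expect any genuine obstacle here. All the nontrivial work has been done earlier: the intervals $I^i_j$ were designed in precisely such a way that, outside the exceptional set $\cB_n$, the sequence $\omega$ is constantly $i$ on $I^i_j$, and the constants in the general block structure (in particular the bound $\delta_\infty \leq 1/32$) were chosen precisely to make the density estimate (F4) deliver the target $1/16$. The lemma is therefore a direct verification combining (F4) with the definition \eqref{e.plane-separating_def}; the only minor point to track is that the pointwise deviation $\|v_{\omega_j} - v_i\|$ is uniformly bounded, which follows from the three displacement vectors $v_0, v_1, v_2$ being fixed.
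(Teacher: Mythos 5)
Your proof is correct and follows essentially the same route as the paper's two-sentence proof: apply (F4) with $M=2$ to get $|J\cap\cB_n|/|J|\leq 2\delta_\infty\leq 1/16$, and combine this with the fact that the definition \eqref{e.plane-separating_def} fills every position of $J\setminus\cB_n$ with the symbol $i$. The only caveat, which the paper's sketch shares, is that the last inequality $\|\rho(J)-v_i\|\leq 1/16$ silently assumes the pointwise deviation $\|v_{\omega_j}-v_i\|$ is at most $1$, which fails when $\{\omega_j,i\}=\{1,2\}$ (giving $\sqrt{2}$); this does not affect the overall argument since the constants involved have ample slack.
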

\begin{proof}
  By (F4), we obtain that $|J\cap \cB_n|/|J| \leq \frac{1}{16}$. Since all free
  positions in $J$ are filled by $i$'s in the $(n+1)$st step of the construction,
  this implies the statement.
\end{proof}

\begin{cor}\label{l4} For $i=0,1,2,j=1,2$ or $i=0,j=3$ we have $\rho(I^i_j)\in
  B_{\frac{1}{16}}(v_i)$.
\end{cor}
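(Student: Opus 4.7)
The plan is to observe that Corollary \ref{l4} is a direct consequence of Lemma \ref{l.long_intervals} once we check that each of the admissible intervals $I^i_j$ itself satisfies the length hypothesis $|I^i_j|\geq a_nd_n/2$ of that lemma. Since an interval is always a subinterval of itself, the conclusion $\rho(I^i_j)\in B_{1/16}(v_i)$ then follows by taking $J=I^i_j$ in Lemma \ref{l.long_intervals}.

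Concretely, I would verify the length condition case by case using properties (PQ1)--(PQ6) and the standing requirements $K\geq 17$, $L\geq 64$. For the three ``$0$''-intervals $I^0_1, I^0_2, I^0_3$, property (PQ1) gives length $(LKd_n+1)a_n$, which is $\geq a_nd_n/2$ since $LK\geq 1$. For $I^1_1$ and $I^1_2$, property (PQ2) gives length at least $(K-1)a_nd_n \geq a_nd_n/2$ because $K-1\geq 16>\tfrac12$. For $I_1^2$, property (PQ3) directly states that the length lies between $a_nd_n/2$ and $a_nd_n$. Finally, for $I^2_2$, property (PQ6) gives length $(2Kd_n-1)a_n$, and again $2Kd_n-1 \geq d_n/2$ is immediate from $K\geq 17$.

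Having confirmed the length bound in each of the admissible cases, I would conclude by applying Lemma \ref{l.long_intervals} with $J=I^i_j$ to obtain $\rho(I^i_j)\in B_{1/16}(v_i)$ for each pair $(i,j)$ in the corollary's statement.

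There is no real obstacle here: the argument is purely bookkeeping from the interval lengths produced by (PQ1)--(PQ6) combined with the standing hypotheses on $K$ and $L$. The only thing requiring any attention is to make sure that one does not inadvertently invoke the corollary for a pair $(i,j)$ outside the admissible list of Lemma \ref{l.long_intervals} (in particular, there is no claim for $I^2_3$ or $I^1_3$, which do not exist).
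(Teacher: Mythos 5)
Your proposal is correct and takes exactly the route the paper intends: the Corollary is stated without a separate proof precisely because it follows by taking $J = I^i_j$ in Lemma~\ref{l.long_intervals}, and your case-by-case length check via (PQ1)--(PQ3), (PQ6) together with $K\geq 17$, $L\geq 64$ verifies the hypothesis $|I^i_j|\geq a_nd_n/2$ in every admissible case. Nothing is missing.
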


\begin{lemma}\label{l5} Let $J\ssq I\ssq \N$ be intervals and assume that $J$
  has one endpoint in common with $I$. 

\begin{itemize}
\item[(a)] If $I=I^0_j$, $j=1,2,3$, then $\rho(J)\in B_{\achtel}(v_0)$. 
\item[(b)] If $I=I^1_j$, $j=1,2$, then $\rho(J)\in B_{\achtel}(v_1)$.
\item[(c)] If $I=I^2_j$, $j=1,2$, then $\rho(J)\in B_{\achtel}(v_2)$.
\item[(d)] If $I=I^*$, then $\rho(J)\in B_{\achtel}(v_1)$. 
\end{itemize}
\end{lemma}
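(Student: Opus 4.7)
I proceed by induction on $n$, where $I$ is one of the intervals defined in step $n+1$ of the construction, and I treat the parts (a), (b), (c), (d) in that order so that the last case can appeal to the first three. Without loss of generality, $J$ shares the left endpoint of $I$. The main dichotomy is the length: long ($|J|\geq a_nd_n/2$) versus short ($|J|<a_nd_n/2$).

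For the long case, parts (a)--(c) are immediate from Corollary \ref{l4}. For part (d), either $J\subseteq I^1_1$ (and case (b) gives the result), or $I^1_1\subseteq J$. In the latter situation I decompose $J$ as a disjoint union of $I^1_1$, $J\cap I^2_1$ and $J\cap I^1_2$ and apply Corollary \ref{l4} to the pieces of length $\geq a_nd_n/2$ (the shorter piece $J\cap I^1_2$, if any, is handled via the already-established part (b)). Combining the results using (PQ2)--(PQ3) and $K\geq 17$ bounds the off-target contribution from $I^2_1$ by $|I^2_1|/|J|\leq 1/(K-1)\leq \frac{1}{16}$, yielding $\rho(J)\in B_{\achtel}(v_1)$.

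In the short case, the bulk of the argument goes into controlling $|J\cap\cB_n|/|J|$, since the remaining positions in $J$ are all filled with the target symbol in step $n+1$ of the construction. For part (a), (PQ1) shows that $I^0_j$ begins with a block of level $n$ which is a shift of $[1,a_n]$; if $|J|\leq a_n$ then Lemma \ref{l3} at level $n-1$ directly gives $\rho(J)\in B_{\achtel}(v_0)$, and otherwise I split $J$ into this initial block (to which Lemma \ref{l2} applies) and a tail starting just past the block. For parts (b)--(d) and for this tail of (a), the left endpoint of $J$ is ``generic'' in the sense that it sits at distance at least $a_kd_k/4$ from every block of level $k\leq n$: this follows from (PQ5) when the endpoint is $p_n$ or $q_n$, and from a direct congruence computation, using the recursion $a_k=(b_{k-1}d_{k-1}+1)a_{k-1}$ and the divisibility $d_{k-1}\mid d_k$, when the endpoint is of the form $(LKd_n+1)a_n+1$ or its mirror image. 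Exploiting this separation together with the super-exponential growth of $a_k$ and $\delta_\infty\leq \frac{1}{32}$, a level-by-level enumeration of blocks that can meet $J$ gives
\begin{equation*}
  \frac{|J\cap\cB_n|}{|J|} \ \leq \ \sum_{k\leq n}\left(\frac{a_k}{|J|}+\frac{1}{d_k}\right) \ \leq \ 2\delta_\infty \ \leq \ \frac{1}{16} \ ,
\end{equation*}
where each level-$k$ summand is nonzero only when $|J|$ exceeds the corresponding separation threshold. Since $|u-v_i|\leq 1$ in the $\ell^\infty$ sense for every $u$ in the simplex spanned by $v_0,v_1,v_2$, this density bound gives $\rho(J)\in B_{\achtel}(v_i)$.

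\textbf{Main obstacle.} The most delicate point is to make the density estimate in the short case uniformly valid at all scales $|J|$, particularly across the transition scales $|J|\sim a_kd_k$: this requires arithmetic care in verifying the generic-position property at the endpoints other than $p_n,q_n$, together with a careful accounting of the partial blocks that can meet the right endpoint of $J$. Once this density bound is in hand, the rest of the argument is a routine convex-combination calculation and the interplay between cases (b) and (d) in the long subcase reduces to bookkeeping with (PQ2)--(PQ3).
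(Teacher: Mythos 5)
Your proposal follows essentially the same strategy as the paper's proof. Both handle the long case $|J|\geq a_nd_n/2$ via Lemma~\ref{l.long_intervals} (respectively Corollary~\ref{l4}); both handle case (a) via the observation that $I^0_1,I^0_2,I^0_3$ carry identical symbol configurations so Lemma~\ref{l3} applies; both treat cases (b), (c) by exploiting the separation of the relevant endpoint from all blocks of level $k\leq n$, using (PQ5) for $p_n,q_n$ and the adjacency to $I^0_j$ (which start and end with level-$n$ blocks) for the other endpoints; and both close case (d) by decomposing $J\supseteq I^1_1$ into its intersections with $I^1_1,I^2_1,I^1_2$ and using $|I^2_1|/|I^1_1|\leq 1/(K-1)\leq 1/16$.

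The one place where your version deviates from the paper and is not justified as sketched is the density estimate in the short case. You run a level-by-level enumeration and assert $|J\cap\cB_n|/|J|\leq\sum_{k\leq n}(a_k/|J|+1/d_k)\leq 2\delta_\infty\leq 1/16$. But with the separation threshold $a_kd_k/4$ you only get $a_k/|J|\leq 4/d_k$ for each level $k$ that $J$ meets, so the sum is controlled by roughly $5\delta_\infty$, not $2\delta_\infty$; since $\delta_\infty$ can be as large as $1/32$, this exceeds the $1/8$ you need. The paper avoids this loss by applying (F4) once, at the largest level $m$ that $J$ intersects: since $|J|\geq a_md_m/4$, (F4) with $M=4$ gives $|J\cap\cB_n|/|J|=|J\cap\cB_m|/|J|\leq 4\delta_m\leq 4\delta_\infty=1/8$, which is exactly the radius of the ball in the conclusion. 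You should replace your level-by-level sum with this single application of (F4); otherwise the approach and case structure you lay out match the paper's argument.
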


\begin{proof}
\begin{itemize}
\item[\em (a)] If $J\ssq I^0_1$ starts with $1$ or $J\ssq I^0_3$ ends with
  $a_{n+1}$, then the statement is contained in Lemma \ref{l3}. However, by
  construction the configurations of symbols in the intervals $I^0_1,\ I^0_2$
  and $I^0_3$ are identical. In order to see this, note that since these have
  the same length and all start and end with a block of level $n$ by (PQ1), the
  configuration of the blocks is identical, and all positions not contained in
  previous blocks are filled by $0$'s. Hence, by symmetry the statements for
  $I^0_1$ and $I^0_3$ extend to the other intervals.
\item[\em (b)] {\em and (c)} \quad The proofs of all cases of (b) and (c) are
  similar. Hence, we consider only one of them. The crucial observation is the
  fact that all endpoints of these intervals have distance $\geq a_kd_k/4$ to
  any block of level $k$. For the points $p_n$ and $q_n$, this is true by
  construction, see (PQ5). For endpoints of $I^1_1, I^1_2$ and $I^2_2$ this
  holds since the adjacent intervals $I^0_j$, $j=1,2,3$, start and end with
  blocks of level $n$. Hence, the nearest block of any level $k\leq n$ in one of
  the considered intervals can appear at distance $(d_k-1)a_k$ to the boundary
  points.

  Assume that $I=I^2_1$ and $J=[p_n,k]$.  If $J$ does not intersect any blocks
  of level $k\leq n$, then $\rho(J)=v_2$. Otherwise, let $m$ be the largest
  integer such that $J \cap B \ne \emptyset$ for some $m$-block $B$. Then by
  (PQ5) the length of $J$ is at least $a_md_m/4$, and due to (F4) we obtain that
  $|J\cap \cB_n|/|J|\leq 4\delta_\infty\leq\achtel$, which implies $\rho(J) \in
  B_{\frac{1}{8}}(v_2)$. As mentioned, the other cases are analogous.
\item[\em (d)] If $J$ is either contained in $I^1_1$ or $I^1_2$, then the
  statement is contained in (b). Otherwise, it follows from the fact that
  $\rho(I^1_j)\in B_{\frac{1}{16}}(v_1)$ by Corollary~\ref{l4} and
  $|I^2_1|/|I^1_j| \leq \frac{1}{K-1} \leq \frac{1}{16}$.
\end{itemize}
\end{proof}

\begin{prop}\label{p:sp}
  Let $T = \{ \lambda v_i + (1 - \lambda )v_j : i,j \in \{ 0, 1, 2 \} , \lambda
  \in [0, 1] \}$ and $S = B_{\frac{1}{8}}(T)$.  Then $\rho(J) \in S$ for every
  $J = [a, b] \subset \N$.
\end{prop}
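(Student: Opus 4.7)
The plan is to prove by induction on $n\ge 0$ that $\rho(J)\in S$ for every interval $J\subseteq [1,a_{n+1}]$; since every $J=[a,b]\subset\N$ is contained in some such $[1,a_{n+1}]$, this suffices. The base case reduces to the chosen initial pattern of $\omega$ on $[1,a_1]$. In the inductive step I let $J\subseteq[1,a_{n+2}]$ and decompose $J$ using the level-$(n+1)$ seven-piece partition $I^0_1,I^1_1,I^2_1,I^1_2,I^0_2,I^2_2,I^0_3$ with type pattern $(0,1,2,1,0,2,0)$, combining Lemma~\ref{l5}, Corollary~\ref{l4}, and Lemma~\ref{l.long_intervals}.

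First I treat the case where $J$ is contained in a single piece $I^i_j$. If $J$ shares an endpoint with $I^i_j$ then Lemma~\ref{l5} immediately gives $\rho(J)\in B_{1/8}(v_i)\subseteq S$; if $J$ is strictly interior but long, $|J|\ge a_{n+1}d_{n+1}/2$, Lemma~\ref{l.long_intervals} yields $\rho(J)\in B_{1/16}(v_i)\subseteq S$. For a short strictly-interior $J$ I use the fine structure of $I^i_j$: along $\cA_{n+1}$ the piece is tiled by periodic copies of $[1,a_{n+1}]$, so I either translate $J$ by a multiple of $a_{n+1}d_{n+1}$ into $[1,a_{n+1}]$ and invoke the inductive hypothesis, or observe that $J$ sits inside a gap between these copies whose positions are all filled with $i$ except for a set of density at most $\delta_\infty\le 1/32$ controlled by (F4), again giving $\rho(J)\in B_{1/16}(v_i)$.

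The more substantial case is when $J$ meets several pieces. Writing $J=J_1\cup\dots\cup J_k$ with $J_\ell=J\cap I^{i_\ell}_{j_\ell}$ in order, the extremes $J_1,J_k$ share one endpoint with their respective pieces, so Lemma~\ref{l5} gives $\rho(J_\ell)\in B_{1/8}(v_{i_\ell})$, while the intermediate $J_\ell$ are full pieces with $\rho(J_\ell)\in B_{1/16}(v_{i_\ell})$ by Corollary~\ref{l4}. Grouping by type I write $\rho(J)=p+\eta$ with $p=\beta_0 v_0+\beta_1 v_1+\beta_2 v_2$ the type-weighted barycentre, the error bound $|\eta|\le \tfrac{1}{16}+\tfrac{|J_1|+|J_k|}{16|J|}$, and $\dist(p,T)\le\min(\beta_0,\beta_1,\beta_2)$ by direct triangle geometry. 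When $J$ touches pieces of at most two of the three types the minimum vanishes and $\rho(J)$ lies within $1/8$ of an edge of $T$, which closes this sub-case.

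The main obstacle is the remaining three-type subcase, which I will settle by enumerating the admissible consecutive spans of pieces and checking $|\eta|+\min(\beta_0,\beta_1,\beta_2)\le 1/8$ in each. The key input is the size hierarchy $|I^0_j|\sim LKa_{n+1}d_{n+1}$, $|I^1_j|\sim Ka_{n+1}d_{n+1}$, $|I^2_1|\le a_{n+1}d_{n+1}$, $|I^2_2|\sim 2Ka_{n+1}d_{n+1}$, combined with $L\ge 64$ and $K\ge 17$: if $J$ contains a full $I^0_2$ its type-$0$ mass dominates and forces $\min(\beta_0,\beta_1,\beta_2)$ to be of order $1/L$, while if $J$ avoids every full $I^0$-piece then contiguity together with the placement of $I^2_1$ inside $I^*$ forces at least one type to carry only a contribution of order $|I^2_1|/|J|$, which is at most $1/K$. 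The choices $L\ge 64$, $K\ge 17$ should give precisely the slack required for the asserted bound $1/8$.
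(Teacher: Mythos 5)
Your overall plan — induction on the level, split into single-piece and multi-piece cases, write $\rho(J)$ as a type-weighted barycentre $p=\beta_0v_0+\beta_1v_1+\beta_2v_2$ plus a small error $\eta$ — is reasonable and close in spirit to the paper's proof, and your two-type subcase is fine. However there is a genuine gap: the three-type subcase is where all the difficulty sits, and you only sketch a size hierarchy and assert that "$L\ge 64$, $K\ge 17$ should give precisely the slack required." That is not a verification, and this step is far from free: tracking the bound $\mu+16\min_i\beta_i\le 1$ (with $\mu=(|J_1|+|J_k|)/|J|$) against the piece lengths given by (PQ1)--(PQ3), one finds the inequality is \emph{exactly tight} at $K=17$ in the configuration $J\subseteq I^0_1\cup I^1_1\cup I^2_1$ with $|J\cap I^0_1|\approx|I^2_1|$; so the claim that the slack "should" be there is precisely what needs to be proved, and there is no margin to wave away. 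You would also need to handle intervals spanning three or more of the five super-intervals without $I^2_2$ (e.g.\ $J\supseteq I^*$ meeting $I^0_1$ and $I^0_2$), which your enumeration sketch does not explicitly address, and your single-piece subcase relies on a density argument via (F4) that only applies when $|J|\gtrsim a_nd_n$; for shorter interior $J$ straddling a block boundary you need the block-decomposition argument of the paper (its Case~4, feeding Lemmas~\ref{l2} and \ref{l3}), not mere translation.

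The paper avoids your tight barycentre estimate entirely, by two structural observations you do not exploit. First, Lemma~\ref{l5}(d) says that any $J''\subseteq I^*$ sharing an endpoint with $I^*$ already satisfies $\rho(J'')\in B_{1/8}(v_1)$: the small type-$2$ piece $I^2_1$ sitting inside $I^*$ is absorbed into the type-$1$ estimate, which effectively reduces partial-$I^*$ intersections to the two-type setting. Second, the paper's Case~1 observes that if $J$ meets both $I^*$ and $I^2_2$ (the only way to otherwise get all three types with substantial type-$2$ mass), then $J$ must contain all of $I^0_2$; since $|I^*\cup I^2_2|\le\tfrac{1}{16}|I^0_2|$, the type-$0$ content dominates and $\rho(J)\in B_{1/8}(v_0)$ by a short direct computation, with no $\min\beta$ term at all. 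These two observations are exactly what makes the proof close without the delicate case enumeration your outline would require, and their absence is the reason your argument cannot be considered complete as written.
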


\begin{proof}
  Let $n$ be the smallest integer such that $J$ is contained in a block of level
  $n+1$. We prove the statement by induction on $n$ and may thus assume that $J$
  is not entirely contained in any block of level $k\leq n$. Moreover, since the
  structure inside all blocks of level $n+1$ is the same, we may assume without
  loss of generality that $J \subseteq [1, a_{n+1}]$.  We distinguish several
  cases.\smallskip

  {\bf Case 1.} Suppose that $J$ intersects both $I^*$ and
  $I^2_2$. In this case $J$ contains $I^0_2$, and by
  Corollary~\ref{l4} we have $\rho(I^0_2)\in
  B_{\frac{1}{16}}(v_0)$. Moreover, $|I^*\cup I^2_2|/|I^0_2| \leq
  \frac{4}{3l} \leq 1/16$. If $J$ also intersects the intervals $I^0_1$
  and $I^0_3$, say $J'=J\cap I^0_1$ and $J''=J\cap I^0_3$, then
  $\rho(J'),\rho(J'')\in B_{1/8}(v_0)$ by Lemma~\ref{l5}(a).
  Putting everything together, we obtain $\rho(J)\ssq B_{1/8}(v_0)$.\smallskip

  {\bf Case 2.} Suppose that $J$ intersects exactly two of the five intervals
  $I^0_1,I^*,I^0_2,I^2_2,I^0_3$. Since all the subcases are similar, we only
  treat one and assume $J$ intersects $I^0_1$ and $I^*$. Let $J'=J\cap I^0_1$
  and $J''=J\cap I^*$. Then $\rho(J')\in B_{\achtel}(v_0)$ by \ref{l5}(a),
  whereas $\rho(J'')\in B_{\achtel}(v_1)$ by Lemma~\ref{l5}(d). Consequently,
  $\rho(J)$ is a convex combination of a vector in $B_{\achtel}(v_0)$ and a
  vector in $B_{\achtel}(v_1)$, and therefore belongs to $S$.

  {\bf Case 3.} Suppose $J \subseteq I^*$ intersects at least two of the
  intervals $I^1_1,I_1^2$ and $I^1_2$. Let $J=J'\cup J''\cup J'''$, where
  $J'=J\cap I^1_1$, $J''=J\cap I_1^2$ and $J'''=J\cap I^1_2$. Then
  $\rho(J'),\rho(J'')\in B_{\achtel}(v_1)$ by Lemma~\ref{l5}(b), and
  $\rho(J'')\in B_{\achtel}(v_2)$ by Lemma~\ref{l5}(c). Hence, we obtain again
  that $\rho(J)\in S$. 

  {\bf Case 4.} Finally, suppose that $J$ is contained in just one of the seven
  intervals of the decomposition of $[1,a_{n+1}]$, say $J\ssq I^i_j$. Then 
  \[
      J\cap \cB_n \ = \ B'\cup B_1 \cup \ldots \cup B_m \cup B'' \ ,
  \]
  where $B'=J\cap B_0$, $B''=J\cap B_{m+1}$ and the $B_l$ with $l=0\ld m+1$ are
  those maximal blocks contained in $\cB_n$ which intersect $J$, ordered in an
  increasing way. Since $J$ is not entirely contained in one block, we can use
  Lemmas~\ref{l2} and \ref{l3} to see that $\rho(B'),\rho(B_1)\ld$ $
  \rho(B_m),\rho(B'')\in B_{\achtel}(v_0)$, and hence $\rho(J\cap \cB)\in
  B_{\achtel}(v_0)$. At the same time we have $\rho(J\smin \cB)=v_i$, such that
  again $\rho(J)$ is contained in $S$. 
\end{proof}

\begin{prop}\label{p:cc}
$\rho_{\Cl(\mathcal{O}(\omega_{\textrm{sp}}, \sigma))}$ separates the plane.
\end{prop}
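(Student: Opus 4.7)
The strategy combines the inclusion $\rho_\cM \subseteq S = B_{\frac{1}{8}}(T)$ (from Proposition~\ref{p:sp} together with Proposition~\ref{propextension}, where $\cM := \Cl(\cO(\omega_{\textrm{sp}},\sigma))$ and $T := \partial\conv(v_0,v_1,v_2)$) with the construction, inside $\rho_\cM$, of a continuum that closely approximates the Jordan curve $T$ and hence surrounds a chosen point in the bounded component of $\R^2\setminus T$. Fix $p := (v_0+v_1+v_2)/3 = (1/3,1/3)$. The distances from $p$ to the three edges of $T$ are $1/3$, $1/3$ and $1/(3\sqrt 2)$, each strictly larger than $1/8$, so $p$ lies in the bounded complementary component of the annular neighborhood $S$ and therefore $p\notin\rho_\cM$.

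To construct the approximating continuum, I use a sliding-window argument. For each large $n$ and each starting position $s\in[1,a_{n+1}]$, let $J^{(n)}_s := [s,s+\ell_n-1]$ with $\ell_n := \lfloor a_n d_n / 2\rfloor$. As $s$ sweeps $[1,a_{n+1}]$, $J^{(n)}_s$ traverses in order the blocks $I^0_1,I^1_1,I^2_1,I^1_2,I^0_2,I^2_2,I^0_3$. Using (F4) to bound $|J^{(n)}_s\cap\cB_n|\le\delta_\infty\ell_n\le\ell_n/32$, together with the definition (\ref{e.plane-separating_def}) of $\omega_{\textrm{sp}}$, one obtains: when $J^{(n)}_s\subseteq I^i_j$, Lemma~\ref{l.long_intervals} yields $\rho(J^{(n)}_s)\in B_{\frac{1}{16}}(v_i)$; when $J^{(n)}_s$ straddles the common boundary of two adjacent $I^i_j$ and $I^{i'}_{j'}$ in proportions $\lambda$ and $1-\lambda$, then $\rho(J^{(n)}_s)=\lambda v_i+(1-\lambda)v_{i'}+O(\delta_\infty)$. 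Consequently the set $\{\rho(J^{(n)}_s):s\in[1,a_{n+1}]\}$ is an $O(\delta_\infty)$-dense approximation of $T$: it visits a neighborhood of every vertex and continuously interpolates along every edge (each edge is in fact traced twice, through two different block transitions).

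Since $\ell_n\to\infty$, Proposition~\ref{propextension} implies that every accumulation point, as $n\to\infty$, of the sliding-window rotation vectors lies in $\rho_\cM$. Hence $\rho_\cM$ contains a continuum $T'$, obtained as the Hausdorff limit along a subsequence of the sliding-window images, whose Hausdorff distance to $T$ is at most $\delta_\infty\le 1/32$. By the monotone interpolation of $s\mapsto\rho(J^{(n)}_s)$ along each edge, and by the fact that each of the three edges appears as the limit of two parallel traces which merge into a single arc in the limit, $T'$ is a simple closed curve in $S$ close to $T$. Since $\delta_\infty$ is much smaller than $\dist(p,T)=1/(3\sqrt 2)$, the Jordan curve $T'$ still encloses $p$; therefore $T'$ separates $p$ from $\infty$ in $\R^2$, and so does the superset $\rho_\cM\supseteq T'$.

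The principal subtlety lies in confirming that the limit continuum $T'$ is genuinely a simple closed curve surrounding $p$, rather than a pinched, self-intersecting, or dendritic configuration that could fail to separate. This relies on the explicit block structure of $\omega_{\textrm{sp}}$ and on the choice of $K$ and $L$ large enough that each edge's plateau is long compared to the sliding-window transition regions, guaranteeing the monotone interpolation along each edge and allowing one to verify that the two parallel edge-traces merge cleanly in the Hausdorff limit.
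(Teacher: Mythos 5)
Your strategy is the right one (sliding windows, plus the containment $\rho_{\cM}\subseteq S$), and the reduction to finding a continuum inside $S$ that encircles $p=(1/3,1/3)$ is a good framing. But there is a genuine gap exactly where you flag it, and it is not a technicality: it is the main point of the paper's proof.

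When you slide the window over all of $[1,a_{n+1}]$, the image $s\mapsto\rho(J^{(n)}_s)$ traverses the triangle's edges \emph{out and back}: it goes $v_0\to v_1\to v_2$ across $I^0_1,I^1_1,I^2_1$, then \emph{retraces} $v_2\to v_1\to v_0$ across $I^1_2,I^0_2$, and finally goes $v_0\to v_2\to v_0$ across $I^2_2,I^0_3$. The resulting chain has winding number $0$ around $p$. Hausdorff closeness to the Jordan curve $T$ does not imply separation (an arc covering all but a tiny gap of a circle is Hausdorff-close but does not separate), and the assertion that ``the two parallel traces merge into a single arc'' so that $T'$ is a simple closed curve is exactly the unverified step. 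There is no a priori reason the two traces of a given edge agree; without a winding-number argument the limit could in principle fail to enclose $p$.

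The paper resolves this by constructing a \emph{closed} chain with winding number one. Instead of sweeping all of $[1,a_{n+1}]$, one takes $J_1=I^2_1=[p_n,q_n]$ and slides it only as far as a specific $J_2\subseteq I^2_2$ chosen to be concentric about a block of level $n$, so that $J_1$ and $J_2$ have identical length and identical internal block configuration and hence $\rho(J_1)=\rho(J_2)$ \emph{exactly}. Along this sweep, Lemmas~\ref{l.long_intervals} and~\ref{l5} show $\rho(J_1+i)$ passes monotonically from $B_{1/8}(v_2)$ through $B_{1/8}(v_1)$ (inside $B_{1/8}(S_{12})$), then through $B_{1/8}(v_0)$ (inside $B_{1/8}(S_{01})$), and back to $B_{1/8}(v_2)$ (inside $B_{1/8}(S_{02})$), with consecutive steps of size at most $2/|J_1|\to 0$. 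Because the chain is closed and wraps once around the hole of the annulus $S$, its upper Hausdorff limit is a continuum separating the two components of $\R^2\smin S$; no simple-closed-curve claim is needed. Incidentally, your larger sweep's limit \emph{would} inherit separation from this sub-chain (a superset of a separating set separates, provided it avoids $p$, which it does since it lies in $S$), but you would still need to isolate and justify the closed sub-chain to run that argument — which is the idea missing from your writeup.
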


\begin{proof} We have that
$$\rho_{\Cl(\mathcal{O}(\omega_{\textrm{sp}}, \sigma))}
= \bigcap_{k\in\N} \overline{\bigcup_{n\geq k} k}$$ where $k=\{\rho(J)
  \mid J\ssq \N \mbox{ is a finite interval with } |J| = n\}$.

  Given $0\leq i< j\leq 2$ we let $S_{ij}=\{ \lambda v_i + (1 - \lambda )v_j :
  \lambda \in [0, 1] \}$. For $n\in\N$, let $J_1=I_1^2=[p_n,q_n]$ and choose an
  interval $J_2\ssq I^2_2$ which has the same length as $J_1$ and is concentric
  around a block of level $n$. Since this also holds for $J_1$, we have that the
  configuration of blocks inside both intervals is the same. Since free
  positions in both intervals are both filled by $2$'s, we have that
  $\rho(J_1)=\rho(J_2)\in B_{\achtel}(v_2)$.

  Let $M_n\in\N$ be such that $J_2=J_1+M_n=\{j+M_n\mid j\in J_1\}$ and let
  $\rho^n_i=\rho(J_1+i)$ for $i=0\ld M_n$. Then all the $\rho^n_i$ belong to
  $S$, and the distance between $\rho^n_i$ and $\rho^n_{i+1}$ is at most
  $2/|J_1|$. Moreover, as $i$ increases from $0$ to $M_n$, the interval $J_1+i$
  will leave $I_1^2$ in order to enter $I^1_2$, move on to $I^0_2$ and
  eventually enter $I^2_2$ until it stops at $J_1+M_n=J_2$.

According to Lemmas~\ref{l.long_intervals} and \ref{l5}, the corresponding
vectors $\rho^n_i$ always remain in $S$. Further, they start in
$B_\achtel(v_2)$, then move to $B_\achtel(v_1)$ while remaining in $S_{12}$,
then move to $B_\achtel(v_0)$ while remaining in $S_{01}$ and finally return to
$B_\achtel(v_2)$ while remaining in $S_{02}$. Note here that $|J_1|\geq a_nd_n/2$,
such that Lemma~\ref{l.long_intervals} applies whenever $J_1+i$ is entirely
contained in one of the intervals of the decomposition, and otherwise we can
always combine two of the statements of Lemma~\ref{l5}

Since $|J_1|\nearrow\infty$ as $n\to\infty$, it follows easily from these
facts that the upper Hausdorff limit of the sequence of finite sets
$\{\rho^n_0\ld \rho^n_{M_n}\}\ssq k$ contains a continuum $\cC$ that separates
the two connected components of the complement of $S$. Since
$\cC\ssq\rho_{\Cl(\mathcal{O}(\omega_{\textrm{sp}}, \sigma))}$, this completes
the proof.
\end{proof}

This shows Theorem~\ref{t.mainresult}(b).

\subsection{Non-empty interior}

It remains to construct $\omega'\in\Sigma^+$ such that $\rho_{\cM(\omega')}$ has
non-empty interior. It turns out that in comparison with the previous cases this
is quite easy.  We use the same block construction as before, with $b_n=1$ for
all $n\in\N$ and $d_n$ chosen such that $\delta = \delta_\infty<
1/10$. Let $\Delta_\delta=\{sv_1+tv_2\mid  s,t>\delta,\ 
s+t<1-\delta\}$ and choose a sequence of vectors $\rho_n\in\Delta_\delta$ such
that the coordinates of $\rho_n$ are integer multiples of $1/a_n$ and $\{\rho_n\mid
n\in\N\}$ is dense in $\Delta_\delta$. Then we simply define $\omega'$
inductively on $[1,a_n]$ in such a way that $\frac{1}{a_n}\sum_{i=1}^{a_n}
v_{\omega'(i)} = \rho_n$ for all $n\in\N$. This is possible, since in each stage
of the construction we have $|[1,a_n]\smin\cB_{n-1}|=a_n-|[1,a_n]\cap \cB_n|
\geq (1-\delta_n)a_n$.  We thus obtain that $\Delta_\delta\ssq
\bigcap_{n\in\N} \overline{\{\rho_k\mid k\geq n\}} \ssq
\rho_{\cM(\omega')}$. \medskip

This proves Theorem~\ref{t.mainresult}(c) and thus completes the proof of
Theorem~\ref{t.mainresult}.

\section{Abundance.}\label{Abundance}

Finally, in this section we want to prove that the phenomena given by
Theorem~\ref{t.mainresult} are abundant, in the sense that they occur for an
open set in $\homeo_0(\T^2)$. Recall that we denote $\cF$ the set of those
$f\in\homtwo$ having non-empty interior rotation set. The result we want to
prove is the following.
\begin{thm}
  The family $\mathcal{G}$ in Theorem~\ref{t.mainresult} contains an open and dense set of
  $\mathcal{F}$.
\end{thm}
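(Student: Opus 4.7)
The plan is to establish the theorem by separately verifying that $\mathcal{G}$ is $C^0$-open in $\mathcal{F}$ and $C^0$-dense in $\mathcal{F}$.

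For openness, I will observe that the entire construction of Theorem~\ref{t.mainresult} depends only on the data of a rotational horseshoe with three symbols whose displacement vectors $v_0, v_1, v_2$ are affinely independent. Since such a horseshoe is a zero-dimensional hyperbolic set with local product structure, the standard stability results for zero-dimensional hyperbolic sets (see \cite{beguin:2002, PasseggiRotSetAxA}) guarantee that the disk $D$, the Markov partition $\mathcal{R}$, the coding $h_\mathcal{R}$, and the three displacement vectors themselves all persist for every sufficiently $C^0$-close $g$. Consequently Corollary~\ref{c.symbolic_rotset} and Proposition~\ref{propextension} apply to $g$ verbatim, and the same Toeplitz sequences constructed in Sections~3.2--3.4 realise the three types of rotation sets demanded by Theorem~\ref{t.mainresult}, placing $g \in \mathcal{G}$.

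For density in $\mathcal{F}$, the starting point is the main result of \cite{PasseggiRotSetAxA}, which asserts that the existence of a rotational horseshoe is $C^0$-generic within an open and dense subset of $\homtwo$, and in particular within $\mathcal{F}$. Given $f \in \mathcal{F}$ and any $C^0$-neighbourhood $\mathcal{U}$ of $f$, I would first produce some $g \in \mathcal{U}$ possessing a rotational horseshoe, and then, by a further small perturbation if necessary, arrange that three of its displacement vectors are affinely independent. This is possible because $\mathrm{int}(\rho(G))$ is non-empty and the horseshoe construction of \cite{PasseggiRotSetAxA} is flexible enough to realise any three rational vectors in $\mathrm{int}(\rho(G))$ as displacement vectors (three such vectors in general position being generic). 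Calling them $v_0, v_1, v_2$, I then re-run Sections~3.2--3.4 with this new triple replacing $(0,0), (1,0), (0,1)$: the general block scheme (F1)--(F5) is purely combinatorial and is unaffected, while the admissible set $V$ in (a), the frame $T$ in (b), and the set $\Delta_\delta$ in (c) are rescaled into the interior of $\conv\{v_0, v_1, v_2\}$ so that the estimates go through essentially word for word, as they depend only on affine independence of the $v_i$ and on elementary bounds involving $\|v_i\|$, $\alpha$, $\beta$.

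The principal obstacle lies on the density side, namely in extracting from \cite{PasseggiRotSetAxA} a rotational horseshoe whose three displacement vectors are affinely independent, rather than merely collinear or repeating. Once this geometric configuration is secured the remainder of the argument is essentially a coordinate change, and the bookkeeping required to transcribe every intermediate estimate of Section~3 to a generic triple $(v_0, v_1, v_2)$ does not present any serious difficulty.
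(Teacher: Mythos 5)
Your proposal is in the right spirit but takes a genuinely different route, and the openness step has a real gap. The paper does not try to show that the set of homeomorphisms possessing a rotational three-symbol horseshoe with independent displacement vectors is $C^0$-open; $C^0$-stability of such a structure for \emph{homeomorphisms} is not a ``standard stability result'' --- hyperbolicity and local product structure are meaningless for a bare $\homeo_0(\T^2)$ map, and even for diffeomorphisms, persistence of horseshoes is a $C^1$ phenomenon, not a $C^0$ one. The paper instead works via the fitted Axiom A diffeomorphisms $\mathcal{F}_0$: Shub--Sullivan gives density of $\mathcal{F}_0$ in $\mathcal{F}$, and then Nitecki's semi-stability theorem provides, around each $f\in\mathcal{F}_0$, a $C^0$-neighbourhood $\mathcal{U}(f)$ on which every $g$ admits a semiconjugacy $h$ onto $f$ homotopic to the identity. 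The crucial point --- and the one your openness argument is missing --- is that this semiconjugacy transfers rotation sets of invariant sets: any minimal set $M$ of $f$ lifts (via a minimal subset of $h^{-1}(M)$) to a minimal set of $g$ with the same rotation set. Thus the open and dense set contained in $\mathcal{G}$ is $\bigcup_{f\in\mathcal{F}_0}\mathcal{U}(f)$, not $\mathcal{G}$ itself, and its openness comes for free from the Nitecki neighbourhoods, not from stability of the horseshoe.

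Your density step is also looser than the paper's. You invoke a ``further small perturbation'' to obtain affinely independent displacement vectors, without a mechanism. The paper instead proves $\mathcal{F}_0\subset\mathcal{G}$ directly: starting from a fitted Axiom A map it extracts (Theorem~\ref{lambdarot}) a basic piece whose rotation set has a convex hull with interior, then upgrades the heteroclinic argument of \cite[Lemma 5.2, Theorem 5.2]{PasseggiRotSetAxA} to produce, for some iterate $f^n$, a three-element Markov partition with displacement vectors $0, v, w$ where $v,w$ are non-colinear integer vectors; a linear change of coordinates reduces to $v=(1,0)$, $w=(0,1)$, so that Section~3 applies verbatim to $f^n$, and finally $\rho_M(F^n)=n\rho_M(F)$ pushes the conclusion back to $f$. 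Your proposal never addresses the need to pass to an iterate, and it handles the reduction to $(1,0),(0,1)$ by ``re-running'' the entire Section~3 with rescaled constants rather than by the simpler linear conjugacy. That re-run would probably work, but it multiplies bookkeeping for no gain. The essential missing idea in your write-up is the semiconjugacy/semi-stability machinery, which is what makes $C^0$-openness attainable at all.
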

This statement essentially follows from series of results on Axiom A
diffeomorphisms which is already collected in \cite{PasseggiRotSetAxA}. We
mainly follow that paper and keep the exposition brief. Recall that
$f\in\homeo_0(\T^2)$ is an axiom A diffeomorphism if the non-wandering set is
hyperbolic and contains a dense set of periodic points. We call by
$\mathcal{F}_0\subset \mathcal{F}$ the set of those axiom A maps having
zero-dimensional (totally disconnected) non-wandering set. The elements of
$\mathcal{F}_0$ are called fitted Axiom A.
\begin{thm}[\cite{ShubSullHomoandDynSys}]
  The set $\mathcal{F}_0$ is dense in $\mathcal{F}$.
\end{thm}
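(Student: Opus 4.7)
The plan is to prove density of $\mathcal{F}_0$ in $\mathcal{F}$ by a three-step reduction: reduce to smooth diffeomorphisms inside $\mathcal{F}$, then to Axiom A diffeomorphisms, and finally to fitted Axiom A diffeomorphisms. First, I will note that $\mathcal{F}$ is $C^0$-open in $\homtwo$, which follows from the upper semicontinuity of the rotation set with respect to uniform perturbations of the lift (Misiurewicz--Ziemian): if $\rho(F)$ has non-empty interior, so does the rotation set of any sufficiently close lift. Since smooth diffeomorphisms are $C^0$-dense in $\homtwo$, I may replace $f$ by a smooth diffeomorphism $f_1\in\mathcal{F}$ arbitrarily close to $f$.

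Second, I invoke the classical $C^0$-density of Axiom A diffeomorphisms among smooth surface diffeomorphisms established in \cite{ShubSullHomoandDynSys}. Combining this with the openness of $\mathcal{F}$, I obtain an Axiom A diffeomorphism $g\in\mathcal{F}$ that is $C^0$-close to $f$.

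Third, the core step is the \emph{fitting} procedure that converts $g$ into a fitted Axiom A diffeomorphism. By Smale's spectral decomposition, $\mathrm{NW}(g)=\Lambda_1\cup\dots\cup\Lambda_k$ is a finite disjoint union of basic sets. Those $\Lambda_i$ of positive topological dimension are (generalized) expanding attractors, contracting repellers, or a global Anosov component; locally each is a product of a Cantor set with an arc, or with a manifold factor in the Anosov case. Shub--Sullivan's surgery introduces $C^0$-small, localized perturbations supported in a neighborhood of each such positive-dimensional piece, cutting transverse gaps along an invariant foliation direction, and so replaces every positive-dimensional basic set by a zero-dimensional hyperbolic one, while preserving hyperbolicity outside the surgery region.

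The main obstacle I anticipate is ensuring that this surgery does not destroy the non-empty interior of the rotation set. To handle this, I will appeal to Franks' theorem to produce a finite collection $\{x_1,\dots,x_m\}$ of periodic points of $g$ whose rotation vectors have convex hull with non-empty interior inside $\rho(G)$. These hyperbolic periodic orbits persist under $C^0$-small perturbations and remain non-wandering, so I can arrange the Shub--Sullivan surgeries to be supported in neighborhoods disjoint from the orbits of $x_1,\dots,x_m$. The resulting fitted Axiom A diffeomorphism $g'$ then retains these periodic orbits with the same rotation vectors, so $\rho(G')$ still has non-empty interior. Hence $g'\in\mathcal{F}_0\cap\mathcal{F}$ is $C^0$-close to $f$, proving density.
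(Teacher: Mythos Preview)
The paper does not prove this statement at all: it is quoted directly from \cite{ShubSullHomoandDynSys} as a black-box input, with no argument given. So there is no ``paper's own proof'' to compare against; your sketch is supplying a justification the authors deliberately outsourced.

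That said, your outline is essentially the correct unpacking of how the cited result yields the statement, with one redundancy worth noting. Your third step worries about whether the Shub--Sullivan surgery might destroy the non-empty interior of the rotation set, and you propose to protect finitely many periodic orbits via Franks' theorem. This is unnecessary: you have already recorded in your first step that $\mathcal{F}$ is $C^0$-open. Since the Shub--Sullivan perturbation is $C^0$-small, the fitted Axiom A diffeomorphism $g'$ automatically lies in $\mathcal{F}$ once the perturbation is small enough, with no need to track individual periodic points. The argument thus collapses to: $\mathcal{F}$ is $C^0$-open, smooth diffeomorphisms are $C^0$-dense in $\homtwo$, and fitted Axiom A diffeomorphisms are $C^0$-dense among smooth diffeomorphisms by \cite{ShubSullHomoandDynSys}; composing these three facts gives the result.
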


\begin{thm}[\cite{NitekiSemStabDiff}]
For any $f\in \mathcal{F}_0$ there is a $C^0$-neighborhood $\mathcal{U}(f)$ of $f$ so that
for all $g\in\mathcal{U}(f)$ there exists a semiconjugacy $h$ between $g$ and $f$, that is,
a continuous onto map $h$ so that $h\circ g=f\circ h$. Moreover, the semiconjugacy can be
chosen in the homotopy class of the identity.
\end{thm}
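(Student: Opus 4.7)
The plan is to combine Shub--Sullivan density, Nitecki's semiconjugacy theorem and the explicit Toeplitz constructions of Section~3. I will show that every fitted Axiom~A map $f_0\in\mathcal{F}_0$ itself belongs to $\mathcal{G}$ and that the whole Nitecki neighborhood $\mathcal{U}(f_0)$ is contained in $\mathcal{G}$. The open set $\mathcal{V}:=\bigcup_{f_0\in\mathcal{F}_0}\mathcal{U}(f_0)$ is then contained in $\mathcal{G}$ and dense in $\mathcal{F}$, since it contains the dense subset $\mathcal{F}_0\subset\mathcal{F}$.

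Fix $f_0\in\mathcal{F}_0$ with lift $F_0$. Since $f_0$ is Axiom~A with $\mathrm{int}(\rho(F_0))\neq\emptyset$, the rotation set $\rho(F_0)$ equals the convex hull of the rotation vectors of the periodic orbits of $F_0$; hence three periodic orbits can be chosen whose integer rotation vectors $w_0,w_1,w_2$ are affinely independent. The constructions of Kwapisz~\cite{kwapisz:1992}, as collected and extended in \cite{PasseggiRotSetAxA}, produce from these orbits a three-symbol rotational horseshoe $C\subset\T^2$ together with a topological disk $D\subset\R^2$ projecting injectively to $\T^2$ and displacement vectors $w_0,w_1,w_2$. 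The whole symbolic machinery of Section~\ref{symbolicdynamics} now applies, and the Toeplitz constructions of Subsections~3.2--3.4 go through verbatim after an affine relabelling of the three displacement vectors: the family $\{\omega_v\}_{v\in V}$, the plane-separating sequence $\omega_{\mathrm{sp}}$ and the dense-vector sequence $\omega'$ produce, via Proposition~\ref{propextension}, $f_0$-minimal sets realising assertions~(a), (b) and~(c) of Theorem~\ref{t.mainresult}. Thus $f_0\in\mathcal{G}$.

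For the stability part, let $g\in\mathcal{U}(f_0)$ and let $h\in\homtwo$ be Nitecki's semiconjugacy in the homotopy class of the identity, so $h\circ g=f_0\circ h$. Its lift $\tilde h:\R^2\to\R^2$ commutes with integer translations, which gives $\sup_{z\in\R^2}\|\tilde h(z)-z\|<\infty$. If $N$ is any $f_0$-minimal set produced above, then $h^{-1}(N)$ is compact and $g$-invariant, hence contains a $g$-minimal set $M$ by Zorn's lemma, and minimality of $N$ forces $h(M)=N$. Decomposing
\[
    G^n(z)-z \;=\; \bigl(F_0^n(\tilde h(z))-\tilde h(z)\bigr) + \bigl(\tilde h(z)-z\bigr) - \bigl(\tilde h(G^n(z))-G^n(z)\bigr)
\]
and using boundedness of $\tilde h-\mathrm{Id}$, we conclude after dividing by $n$ that $\rho_M(G)=\rho_{h(M)}(F_0)=\rho_N(F_0)$. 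Applied to the three families of minimal sets constructed for $f_0$, this places $g$ in $\mathcal{G}$ as well, so $\mathcal{U}(f_0)\subset\mathcal{G}$ and the proof is complete.

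The substantive point is the first step: one must verify that a \emph{general} fitted Axiom~A map in $\mathcal{F}$ actually carries a three-symbol rotational horseshoe with affinely independent integer displacement vectors and a topological disk $D$ meeting the hypotheses of Section~\ref{symbolicdynamics}. This relies on the periodic-orbit realisation of rational rotation vectors inside basic sets, Markov partitions and shadowing, all of which are already assembled in \cite{PasseggiRotSetAxA}; once the horseshoe is in place, the symbolic constructions of Section~3 depend only on the affine independence of $w_0,w_1,w_2$ (the set $V$ and the simplex $\Delta$ being adjusted accordingly), and the semiconjugacy transfer is essentially formal.
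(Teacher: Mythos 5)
You were asked to prove the quoted theorem itself, i.e.\ Nitecki's semi-stability result: for a fitted Axiom~A map $f\in\mathcal{F}_0$ there is a $C^0$-neighbourhood $\mathcal{U}(f)$ such that every $g\in\mathcal{U}(f)$ admits a continuous surjection $h$ with $h\circ g=f\circ h$, and $h$ can be taken in the homotopy class of the identity. In the paper this is an external input, cited from Nitecki's \emph{On Semi-Stability for Diffeomorphisms}, and no proof is given beyond that citation. Your proposal does not prove this statement: in your second paragraph you write ``let $h$ be Nitecki's semiconjugacy in the homotopy class of the identity'', which is precisely the assertion to be established, and you then use it to prove a different result, namely the abundance statement of Theorem~\ref{t.genericity} (that $\cG$ contains an open dense subset of $\cF$). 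Relative to the assigned statement this is circular; relative to the paper, it reproduces the surrounding argument of Section~\ref{Abundance} rather than the theorem in question.

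A genuine proof would have to construct the semiconjugacy. The standard route uses that for $f\in\mathcal{F}_0$ the nonwandering set $\Omega(f)$ is hyperbolic and zero-dimensional, hence expansive with the shadowing property and carrying a Markov partition; for $g$ sufficiently $C^0$-close to $f$ one shadows $g$-orbits by $f$-orbits to define $h$ near $\Omega(f)$, then extends $h$ to all of $\T^2$ along the filtration adapted to the spectral decomposition, and finally checks continuity, surjectivity (e.g.\ via degree, since $h$ is $C^0$-close to the identity) and the homotopy class. None of these steps appears in your write-up. As a secondary point, even inside the argument you do give, the claim $h\in\homtwo$ is incorrect: a semiconjugacy is merely continuous and onto, not invertible. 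Your subsequent steps survive this (the preimage $h^{-1}(N)$ and the bounded-displacement estimate only need $h$ continuous, surjective and at bounded distance from the identity on lifts), but the misstatement signals that the actual content of the theorem --- why such an $h$ exists at all --- has not been engaged with.
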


The last theorem implies in particular that given $g\in\mathcal{U}(f)$ as above,
we have $\rho_{C}(G)=\rho_{h(C)}(F)$ for any closed invariant set $C$ of $g$. Thus if
we prove that $\mathcal{F}_0\subset \mathcal{G}$, we automatically have that
$$\bigcup_{f\in\mathcal{A}_0}\mathcal{U}(f)\subset \mathcal{G}$$
by means of the last theorem, which proves Theorem~\ref{t.genericity}. Thus, the
remainder of this section is devoted to showing that $\mathcal{F}_0\subset
\mathcal{G}$.  \medskip

Recall that a {\em basic piece} $\Lambda \subset \T^2$ of a diffeomorphism $f$
is a hyperbolic transitive set which is locally maximal. Given a set $X\subset
\R^2$ we denote its convex hull by $\textrm{conv}(X)$.  The proof of the
following statement can be found in \cite[Corollary 5.2]{PasseggiRotSetAxA}.

\begin{thm}\label{lambdarot}
  Every $f\in\mathcal{F}_0$ has a basic piece $\Lambda$ so that
  $\textrm{\conv}(\rho_\Lambda(F))$ has non-empty interior.
\end{thm}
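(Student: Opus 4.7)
The plan is to argue by contradiction, combining the spectral decomposition of Axiom~A diffeomorphisms with Franks' realisation theorem for rational rotation vectors in $\inte(\rho(F))$.

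First, since $f\in\mathcal{F}_0$ is fitted Axiom~A, I would apply Smale's spectral decomposition to write $\Omega(f)=\Lambda_1\sqcup\dots\sqcup\Lambda_m$ as a finite disjoint union of basic pieces, each hyperbolic, transitive and locally maximal. Since periodic points of $f$ are automatically non-wandering, every periodic orbit of $f$ is contained in exactly one basic piece $\Lambda_i$.

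Now suppose, toward a contradiction, that for every $i\in\{1,\dots,m\}$ the set $\conv(\rho_{\Lambda_i}(F))$ has empty interior, so is contained in some affine line $L_i\ssq\R^2$. Each periodic orbit of $f$ lies in some $\Lambda_i$, and its rotation vector therefore lies in $\rho_{\Lambda_i}(F)\ssq L_i$. Consequently, the set $\cP$ of all rotation vectors of periodic orbits of $f$ is contained in $\bigcup_{i=1}^m L_i$, which is closed and has empty interior in $\R^2$.

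On the other hand, since $f\in\cF$ we have $\inte(\rho(F))\neq\emptyset$, and Franks' realisation theorem \cite{franks:1989} asserts that every rational vector of $\inte(\rho(F))$ is the rotation vector of some periodic orbit of $f$. Rational vectors are dense in any open subset of $\R^2$, so $\cP$ contains a dense subset of the non-empty open set $\inte(\rho(F))$. This is incompatible with $\cP\ssq\bigcup_iL_i$, since no subset of a closed set with empty interior can be dense in a non-empty open set. Hence some basic piece $\Lambda$ must satisfy $\inte(\conv(\rho_\Lambda(F)))\neq\emptyset$, as required.

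I expect no substantial obstacle in this plan. The spectral decomposition is standard for Axiom~A, and Franks' theorem provides exactly the right realisation tool; the only point requiring care is to invoke Franks' theorem in the version that places the realising periodic orbit inside $f$ itself (and hence, via Axiom~A, inside a basic piece), which is precisely its classical formulation. The symbolic and Markov-partition structure of basic pieces used in the earlier sections is not directly needed here, although it clarifies at a deeper level why at least one basic piece should carry two-dimensional rotational behaviour.
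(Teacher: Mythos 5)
Your argument is correct. Let me note first that the paper itself does not prove this statement; it simply cites \cite[Corollary~5.2]{PasseggiRotSetAxA}, so there is no internal proof to compare against, and a self-contained argument such as yours is welcome.

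The logic holds up: for $f\in\mathcal F_0$ the spectral decomposition gives $\Omega(f)=\bigsqcup_{i=1}^m\Lambda_i$ with each $\Lambda_i$ a basic piece; if each $\conv(\rho_{\Lambda_i}(F))$ had empty interior it would, being convex, lie in an affine line $L_i$; periodic points are non-wandering, hence lie in some $\Lambda_i$, so all periodic rotation vectors lie in the nowhere dense closed set $\bigcup_iL_i$; but since $f\in\cF$, Franks' realisation theorem produces periodic orbits for a dense set of (rational) vectors in $\inte(\rho(F))\neq\emptyset$, a contradiction. Every step (closedness of a finite union of lines, that a convex planar set with empty interior lies in a line, that periodic orbits in $\Lambda_i$ contribute to $\rho_{\Lambda_i}(F)$) is sound. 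One small presentational remark: you should state explicitly at the outset that $f\in\homtwo$, so Franks' theorem applies; this is built into $\cF_0\subset\cF\subset\homtwo$ but worth flagging since Franks' theorem is specific to the identity isotopy class.

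Regarding the comparison of routes: Passeggi's cited proof is embedded in a more structural analysis of Axiom~A surface diffeomorphisms (heteroclinic relations between translates of fixed points in the lift, Markov partitions), which is in any case needed later in the paper to get the rotational horseshoe of Theorem~\ref{partitions}. Your argument is shorter and more elementary --- it bypasses all of that machinery and works directly from Franks' theorem plus a Baire-type density observation --- at the cost of not producing any of the extra structure (heteroclinic cycles, Markov coding) that the rest of Section~\ref{Abundance} relies on. As an isolated proof of Theorem~\ref{lambdarot} it is a clean and valid alternative.
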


We denote the basic set given in the last theorem by
$\Lambda_{\textrm{rot}}$. The following results is a mixture of \cite[Lemma
5.2]{PasseggiRotSetAxA} and the proof of \cite[Theorem 5.2]{PasseggiRotSetAxA}.
There is only one new consideration which is not done in
\cite{PasseggiRotSetAxA}, which is the following.

In \cite[Lemma 5.2]{PasseggiRotSetAxA} the assumption is that we have a basic
piece whose rotation set is not a single point, and the conclusion is the
existence of a heteroclinic relation for its lift between a fixed point and an
integer translation of it. In our situation given by Theorem~\ref{lambdarot}, we
have a basic piece whose rotation set has at least two non-colinear vectors, and
the conclusion we need is the existence of a fixed point of the lift that has
heteroclinic relations with two non-colinear integer translations of itself.
However, the proof for this case is completely analogous to that of \cite[Lemma
5.2]{PasseggiRotSetAxA}. Then, applying the same arguments as in the proof of
\cite[Theorem 5.2]{PasseggiRotSetAxA}, one easily obtains the following result.

\begin{thm}\label{partitions}
  Suppose $f\in\mathcal{F}_0$ and let $\Lambda_{\textrm{rot}}$ be the basic
  piece given by Theorem~\ref{lambdarot}.  Then there exists a positive integer
  $n$ and an invariant set $\Lambda\subset \Lambda_{\textrm{rot}}$ which admits
  a Markov partition $\mathcal{R}=\{R_0,R_1,R_2\}$, so that $\bigcup_{i=0}^2R_i$
  is contained in a topological disk $D$, $f_{|\Lambda}$ is conjugated via
  $h_{\mathcal{R}}$ to the full shift on $\{0,1,2\}^{\Z}$, and if we consider
  lifts $\tilde{R}_0,\tilde{R_1},\tilde{R}_2$ of $R_0,R_1,R_2$ we have:
\begin{itemize}
\item $F^n(\tilde{R}_0)\cap \tilde{R}_0\neq\emptyset,$
\item $F^n(\tilde{R}_1)\cap \tilde{R}_1+v\neq\emptyset,$
\item $F^n(\tilde{R}_2)\cap \tilde{R}_2+w\neq\emptyset,$
\end{itemize}
where $v,w\in \Z^2\setminus\{0\}$ are non-colinear.

\end{thm}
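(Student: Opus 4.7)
The plan is to follow the proof strategy of \cite[Theorem 5.2]{PasseggiRotSetAxA}, adapting its central Lemma 5.2 to produce two non-colinear heteroclinic loops rather than a single one. By Theorem~\ref{lambdarot}, $\conv(\rho_{\Lambda_{\textrm{rot}}}(F))$ has non-empty interior, so using density of periodic orbits in the basic piece together with the fact that rational vectors in the interior of the rotation set are realised by periodic orbits, one extracts two non-colinear integer vectors $v,w$ realised as rotation vectors of periodic orbits in $\Lambda_{\textrm{rot}}$. Passing to a common iterate $f^n$ and an appropriate lift, these periodic orbits lift to fixed points of $F^n$ at positions which, after an integer translation, I take to be $p$, $p+v$ and $p+w$.

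The first substantive step is the extension of \cite[Lemma 5.2]{PasseggiRotSetAxA} indicated by the authors. The cited lemma, given a fixed point $p$ and a non-trivial rotation vector $u$ in the basic piece, produces a heteroclinic intersection between $W^u(p)$ and $W^s(p+u)$ (and, symmetrically, between $W^u(p+u)$ and $W^s(p)$) by combining transitivity of the basic piece with a shadowing argument that connects orbits of prescribed rotation to $p$. Applying this reasoning separately with $u=v$ and $u=w$ yields the two heteroclinic loops based at the single fixed point $p$; non-colinearity of $v,w$ plays no role at this stage and only matters in the horseshoe assembly that follows.

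The Markov partition is then built by the rotational horseshoe construction of \cite{kwapisz:1992} and \cite[Theorem 5.2]{PasseggiRotSetAxA}. Around $p$ I would choose three small thin rectangles $R_0,R_1,R_2$ aligned with the stable/unstable splitting: $R_0$ sits around $p$, while $R_1$ and $R_2$ are placed along the two unstable branches leading to $p+v$ and $p+w$ respectively. The $\lambda$-lemma then guarantees that for a sufficiently large iterate $n$ the image $F^n(\tilde R_i)$ stretches across the appropriate translate of $\tilde R_i$, producing the three intersection conditions of the statement and, via the standard hyperbolic Markov partition machinery on a zero-dimensional basic piece, a full-shift coding on $\{0,1,2\}^\Z$.

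The main obstacle is to keep the three rectangles pairwise disjoint and contained in a single topological disk $D\ssq\R^2$ projecting injectively to $\torus$, since the two non-colinear heteroclinic excursions may a priori interact on the torus and force overlaps. This is handled exactly as in the one-direction argument of \cite[Theorem 5.2]{PasseggiRotSetAxA}: one shrinks the rectangles transverse to the local unstable direction at $p$, compensates by enlarging the iterate $n$, and uses the density of the branches of $W^u(p)$ and $W^s(p)$ in $\Lambda_{\textrm{rot}}$ to select the heteroclinic arcs realising $v$ and $w$ so that their projections to $\torus$ remain disjoint. The resulting invariant set $\Lambda=\bigcap_{k\in\Z}f^{nk}(R_0\cup R_1\cup R_2)\ssq\Lambda_{\textrm{rot}}$ then carries the required symbolic conjugacy with the displacement behaviour prescribed by the statement.
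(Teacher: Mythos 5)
Your proposal follows essentially the same route as the paper: the paper's own proof is a brief deferral to \cite[Lemma 5.2 and Theorem 5.2]{PasseggiRotSetAxA}, flagging as the only genuinely new step the extension of that Lemma 5.2 from one heteroclinic relation (between a lifted fixed point and an integer translate of it) to two such relations with non-colinear translates, and your sketch reproduces precisely this outline while filling in the $\lambda$-lemma horseshoe assembly and disk-disjointness considerations in the way the cited reference handles the one-translation case.
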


This implies that any element $f$ in $\mathcal{F}_0$ has a power which already
has very similar properties to the ones we used in the constructions in the
previous sections. In fact, by consindering an iterate $f^n$ of $f$ and
performing a linear change of coordinates on \torus, we may assume that $v=v_1$
and $w=v_2$ (see \cite{kwapisz:1992} for details). Therefore $f^n$ has minimal
sets with the rotation sets described in Theorem~\ref{t.mainresult}. However,
since $\rho_M(F^n)=n\rho_M(F)$, the same applies to $f$ itself. This completes the
proof of Theorem~\ref{t.genericity}.

\enlargethispage*{1000pt}


\end{document}